\documentclass[12pt,a4paper,twoside]{amsart}
\usepackage{mathrsfs}
\usepackage{amssymb}
\usepackage{amsxtra}
\usepackage{hyperref}
\usepackage{cleveref}
\usepackage[T1]{fontenc}
\usepackage[utf8]{inputenc}

\newcommand{\pB}{\mathcal{B}}
\newcommand{\pS}{\mathcal{S}}
\newcommand{\eH}{\mathscr{H}}
\newcommand{\eK}{\mathscr{K}}
\newcommand{\eM}{\mathscr{M}}
\newcommand{\eN}{\mathscr{N}}
\newcommand{\eR}{\mathscr{R}}
\newcommand{\eX}{\mathscr{X}}
\newcommand{\eY}{\mathscr{Y}}
\newcommand{\bC}{\mathbb{C}}
\newcommand{\bN}{\mathbb{N}}
\newcommand{\bZ}{\mathbb{Z}}
\newcommand{\bdA}{\boldsymbol{A}}
\newcommand{\bdB}{\boldsymbol{B}}
\newcommand{\bdC}{\boldsymbol{C}}
\newcommand{\bdD}{\boldsymbol{D}}
\newcommand{\asc}{\mathop{\rm asc}} 

\DeclareMathOperator{\Hol}{{\rm Hol}}
\DeclareMathOperator{\dist}{{\rm dist}}
\newtheorem{theorem}{Theorem}
\newtheorem{proposition}[theorem]{Proposition}
\newtheorem{lemma}[theorem]{Lemma}
\newtheorem{corollary}[theorem]{Corollary}
\theoremstyle{definition}

\newtheorem{example}[theorem]{Example}
\newtheorem{remark}[theorem]{Remark}

\numberwithin{equation}{section}
\begin{document}
%
\title[On the ascent and the angle between the null space and the range]{On the ascent and the angle between the 
null space and the range of elementary operators}
\author[J. Bra\v{c}i\v{c}]{Janko Bra\v{c}i\v{c}}
\address{Faculty of Natural Sciences and Engineering, University of Ljubljana, A\v{s}ker\v{c}eva c. 12, SI-1000 Ljubljana, Slovenia}
\email{janko.bracic@ntf.uni-lj.si}
\author[B. Kuzma]{Bojan Kuzma}
\address{University of Primorska, Glagolja\v{s}ka 8, SI-6000 Koper, Slovenia}
\email{bojan.kuzma@famnit.upr.si}
\author[H. Stankovi\'{c}]{Hranislav Stankovi\'{c}}
\address{Faculty of Electronic Engineering, University of Ni\v{s}, Aleksandra Medvedeva 4, Ni\v{s}, Serbia}
\email{hranislav.stankovic@elfak.ni.ac.rs}
\keywords{elementary operator; inner derivation; ascent; angle between subspaces}
\subjclass[2020]{Primary 47B47}

\begin{abstract}
We study the angle between the null space and the range of elementary operators of length one or two acting on 
$\pB(\eX)$, the Banach algebra of all bounded linear operators on a complex Banach space $\eX$. 
For the multiplication operator $\mu_{A,B}(X) = AXB$, we characterize positivity of this angle in terms of the
corresponding angles for $A$ and $B^*$.
For elementary operators of length two $\Delta_{\bdA,\bdB} = \mu_{A_1,B_1} - \mu_{A_2,B_2}$, we establish 
conditions under which the angle is positive, and the ascent of $\Delta_{\bdA,\bdB}$ equals one.
Finally, for a generalized derivation $\delta_{A,B}$ and an injective holomorphic function $f$ on a
neighborhood of $\sigma(A)\cup\sigma(B)$, we show that the angle between the null space and the range of
$\delta_{f(A),f(B)}$ is positive whenever the angle between the null space and the range of
$\delta_{A,B}$ is positive.
\end{abstract}
\maketitle

\section{Introduction and preliminaries} \label{Sec01}
\setcounter{theorem}{0}

Elementary operators constitute one of the fundamental classes of linear transformations acting on operator algebras.
Besides their intrinsic importance, they provide a common framework for multiplication operators, generalized
derivations, commutators, and many classes of linear preservers. Their algebraic and spectral properties have been
studied extensively, and numerous applications have been found in operator theory and operator algebras. We refer, 
for example, to the work of Moln\'ar and \v{S}emrl \cite{MS} on elementary operators on standard operator algebras.

While the algebraic structure of elementary operators has been investigated in considerable detail, comparatively 
little attention has been paid to the geometry of the pair consisting of the null space and the range of such operators.
The purpose of the present paper is to show that this geometry carries substantial information about the operator
itself. More precisely, we demonstrate that the angle between the null space and the range provides a natural 
geometric invariant that is closely connected with the ascent.

Among elementary operators, derivations $\delta_{A}(X)=AX-XA$ and generalized derivations 
$\delta_{A,B}(X)=AX-XB$ occupy a distinguished position
because of their close relationship with commutator theory, invariant subspaces, and spectral properties of operators. 
Classical theorems of Kleinecke \cite{Kle} and Shirokov \cite{Shi} imply that second-order commutator relations 
force quasinilpotence, while more recent localization results of Bra\v{c}i\v{c} and Kuzma \cite{BK} considerably
strengthened this picture. Another remarkable theorem due to Anderson \cite{And} asserts that if $A$ is an
isometry and $B$ is a contraction on a Hilbert space, then the null space and the range of $\delta_{A,B}$ are
orthogonal (in the sense of \cite{Jam}). Furthermore, Weber \cite{Web} proved that for every function $f$ holomorphic 
in a neighborhood of the spectrum $\sigma(A)$, the range of $\delta_{f(A)}$ is contained in the range of $\delta_A$,
that is, $\eR(\delta_{f(A)})\subseteq\eR(\delta_A)$, revealing an unexpected compatibility between derivations and
holomorphic functional calculus. Related range inclusion problems for derivations have also been investigated by 
Magajna \cite{Mag}, who established deep connections between inclusions of derivation ranges and bicommutant 
relations.

Although these results originate from rather different techniques and settings, they share a common geometric flavor. 
The present paper approaches range inclusion and ascent questions for elementary operators from the viewpoint of the 
relative position of the null space $\eN(\Delta)$ and the range $\eR(\Delta)$ of an elementary operator $\Delta$. 
In particular, the positivity of the angle between these subspaces turns out to be closely related to the ascent-one
property $\eN(\Delta)\cap\eR(\Delta)=\{0\}$, which plays a central role throughout the paper.

The first part of the paper develops this geometric approach for elementary operators of the form
$\Delta_{\bdA,\bdB}(X)=A_1XB_1-A_2XB_2$, where $\bdA=(A_1,A_2)$ and $\bdB=(B_1,B_2)$ are
commuting pairs of operators. We establish sufficient conditions guaranteeing the positivity of the angle
between the null space and the range in terms of the power boundedness of the associated multiplication operators
$\mu_{A_i,B_i}(X)=A_i XB_i$ and the power-bounded below behavior on their common null space. 
As an immediate consequence, we obtain new ascent-one criteria for elementary operators of length two. Several
applications are presented, including generalized derivations generated by doubly-bounded operators and by 
pairs consisting of an isometry and a contraction, recovering Anderson's Orthogonality Theorem from the
general geometric principle.

The second part of the paper studies generalized derivations under holomorphic functional calculus. 
Using elementary identities together with Runge approximation, we establish range inclusion results for
rational and holomorphic functions, and prove that $\eR(\delta_{f(A),f(B)})\subseteq\overline{\eR(\delta_{A,B})}$
for every function $f$ holomorphic in a neighborhood of $\sigma(A)\cup\sigma(B)$. When $f$ is injective, equality 
holds for the closures of the ranges, and the corresponding null spaces coincide. This extends Weber's Theorem from
derivations to generalized derivations by a direct and elementary argument.

In the rest of this section, we introduce notation and basic terminology.
Let $\eX$ be a complex Banach space and let $\pB(\eX)$ be the Banach algebra of all bounded linear operators on 
$\eX$. The dual space of $\eX$ is denoted by $\eX^*$, and the pairing between these two Banach spaces is given
by $\langle x,\xi\rangle=\xi(x)$ for all $x\in\eX$ and $\xi\in\eX^*$. Recall that the adjoint operator of
$A\in\pB(\eX)$ is the operator $A^*\in\pB(\eX^*)$ satisfying $\langle Ax,\xi\rangle=\langle x,A^*\xi\rangle$
for all $x\in\eX$ and $\xi\in\eX^*$.

Recall that the ascent $\asc(A)$ of $A$ is the smallest non-negative integer $k$ such that 
$\eN(A^k) =\eN(A^{k+1})$. If there is no non-negative integer with this property, then $\asc(A)=\infty$
(see \cite[\S 20]{Mul}).
It is easily seen that $\asc(A)$ is a non-negative integer $k$ if and only if $\eR(A^k)\cap\eN(A)=\{0\}$.
Hence, $\asc(A)\leq 1$ if and only if the intersection $\eR(A)\cap\eN(A)$ is trivial; in particular, if $A$ is
an injective operator, then its ascent is $0$.

If $A, T\in\pB(\eX)$ are operators such that the commutator $[A, T]:=AT-TA$ commutes with $A$, then the
celebrated Kleinecke-Shirokov Theorem \cite{Kle, Shi} asserts that the commutator $[A, T]$ is a quasinilpotent 
operator (see \cite{And, BK, Kap, Sta} for some generalizations). If $\eX$ is
finite-dimensional, then the Kleinecke-Shirokov Theorem reduces to Jacobson's Lemma \cite[Lemma~2]{Jac},
which says that the commutator $[A, T]$ is nilpotent whenever $[A,[A, T]]=0$.
It is clear that the condition $[A,[A,T]]=0$ is equivalent to 
$[A,T]\in \eR(\delta_A)\cap\eN(\delta_A)$. Hence, the intersection $\eR(\delta_A)\cap\eN(\delta_A)$
measures the second-order commutation. Instead of analyzing this algebraically, we quantify how far the two spaces 
are from each other geometrically. In this paper, we are concerned with a slightly more general question of when 
the intersection $\eR(\Delta)\cap\eN(\Delta)$ is trivial for an elementary operator $\Delta$ of length at most two.
Our approach relies on the notion of the angle between two linear submanifolds. Since the intersection 
$\eR(\Delta)\cap\eN(\Delta)$ is trivial whenever the angle between $\eR(\Delta)$ and $\eN(\Delta)$ is positive, 
our main results are related to the question of the positivity of the angle between these two linear submanifolds 
of operators. 

\section{The angle between subspaces} \label{Sec02}
\setcounter{theorem}{0}

Following \cite{Jam}, we say that a vector $x\in\eX$ is Birkhoff-James orthogonal to a vector $y\in\eX$ if 
$ \|x+\lambda y\|\geq \| x\|$ for all $\lambda\in\bC$.
In the case when $\eX$ is a Hilbert space, the Birkhoff-James orthogonality coincides
with the usual orthogonality. The above definition of Birkhoff-James orthogonal vectors can be extended
to Birkhoff-James orthogonality of linear submanifolds of $\eX$ as follows. If $\eK$ and $\eM$ are linear submanifolds,
then $\eK$ is Birkhoff-James orthogonal to $\eM$ if $\| x-y\|\geq \| x\|$ for all $x\in\eK$ and all $y\in \eM$.
In this case, we write $\eK\perp\eM$. Note that $\{0\}\perp\eM$ and $\eK\perp\{0\}$. Let us
mention that the orthogonality is defined a bit differently in \cite{And, AF}. However, in what follows, the difference
will not be important because we will work with a more general notion of an angle between linear submanifolds.

Let $\eK\ne \{0\}$ and $\eM\ne\{0\}$ be linear submanifolds of $\eX$. It is obvious that the number 
$\inf\{ \| x-y\|;\; x\in \eK, \|x\|=1,\; y\in \eM\}$ belongs to the interval $[0,1]$. 
The angle $\angle(\eK,\eM)$ at which $\eK$ meets $\eM$ is defined by
\begin{equation} \label{eq14}
\sin\bigl(\angle(\eK,\eM)\bigr)=\inf\{ \| x-y\|;\; x\in \eK, \|x\|=1,\; y\in \eM\}.
\end{equation}
If $\eK=\{0\}$ or $\eM=\{0\}$, then we will set $\sin\bigl(\angle(\eK,\eM)\bigr)=1$. Let $0\ne x\in\eK$ and $y\in\eM$
be arbitrary. Then $\|x-y\|=\|x\|\|\frac{x}{\|x\|}-\frac{y}{\|x\|}\|$ and therefore
\begin{equation} \label{eq08}
\|x-y\|\geq \sin\bigl(\angle(\eK,\eM)\bigr)\|x\|\qquad \text{for all}\; x\in\eK,\; y\in\eM.
\end{equation}

It follows from the above definition that $\eK$ is Birkhoff-James orthogonal to $\eM$ if and only if 
$\angle(\eK,\eM)=\frac{\pi}{2}$.
Note that if $\eK\subseteq \eK'$ and $\eM\subseteq \eM'$, where $\eK'$ and $\eM'$ are linear submanifolds, 
then \eqref{eq14} gives $\angle(\eK',\eM')\leq \angle(\eK,\eM)$. However, for closures $\overline{\eK}$ and
$\overline{\eM}$ of $\eK$ and $\eM$, respectively, one has 
$\angle(\overline{\eK},\overline{\eM})= \angle(\eK,\eM)$.

In general, angles $\angle(\eK,\eM)$ and $\angle(\eM,\eK)$ can be different. However, it is not hard to see that
$\angle(\eK,\eM)>0$ if and only if $\angle(\eM,\eK)>0$.\medskip

Let $\eK$ and $\eM$ be linear submanifolds of $\eX$. If $\angle(\eK,\eM)>0$, then $\eK\cap\eM=\{0\}$.
This is a trivial observation; however, we point it out because it will be frequently used. 

\begin{lemma} \label{lem04}
Let $\eK$ and $\eM$ be subspaces of $\eX$ such that $\eK\cap\eM=\{0\}$. Then $\angle(\eK,\eM)>0$
if and only if $\eK+\eM$ is a subspace of $\eX$ (i.e., a closed linear submanifold).
In particular, if $\eK\cap\eM=\{0\}$ and $\eM$ is finite-dimensional, then $\angle(\eK,\eM)>0$.
\end{lemma}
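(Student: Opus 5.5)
The plan is to pass through the algebraic direct sum $\eK\oplus\eM$ and the projection $P\colon \eK+\eM\to\eK$, $P(x+y)=x$. Since $\eK\cap\eM=\{0\}$, this map is well defined and linear. The key observation is that $\angle(\eK,\eM)>0$ is equivalent to boundedness of $P$.

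To see this, suppose $\sin\angle(\eK,\eM)=s>0$. Then \eqref{eq08} gives $\|x+y\|=\|x-(-y)\|\geq s\|x\|$, so $\|P\|\le 1/s$. Conversely, if $\|P\|\le C$, then for $x\in\eK$ with $\|x\|=1$ and any $y\in\eM$ we get
$$1=\|P(x-y)\|\le C\|x-y\|,$$
hence $s\ge 1/C>0$.

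For the direction ($\Rightarrow$), assume $P$ is bounded. Let $z_n=x_n+y_n\to z$ in $\eX$. Then $(z_n)$ is Cauchy, so $x_n=Pz_n$ is Cauchy, and so is $y_n=z_n-x_n$. Because $\eK$ and $\eM$ are closed, $x_n\to x\in\eK$ and $y_n\to y\in\eM$, and therefore $z=x+y\in\eK+\eM$. So $\eK+\eM$ is closed.

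For the direction ($\Leftarrow$), assume $\eK+\eM$ is closed. Then it is a Banach space, and $P$ has closed graph. Indeed, if $x_n+y_n\to z$ and $x_n\to w$, then $w\in\eK$ and $y_n\to z-w\in\eM$ by closedness. Hence $z=w+(z-w)$ and $Pz=w$. By the closed graph theorem $P$ is bounded, so $\angle(\eK,\eM)>0$. An equivalent route is the open mapping theorem applied to the bounded bijection $\eK\times\eM\to\eK+\eM$, $(x,y)\mapsto x+y$, whose inverse is then bounded.

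The final assertion reduces to the classical fact that a closed subspace plus a finite-dimensional subspace is closed. I would prove it by induction on $\dim\eM$, adding one vector at a time. For $u\notin\eK$ the functional $x+\lambda u\mapsto\lambda$ is bounded on $\eK+\bC u$, because $\dist(u,\eK)>0$ gives $|\lambda|\dist(u,\eK)\le\|x+\lambda u\|$. It follows that $\eK+\bC u$ is closed: a convergent sequence $x_n+\lambda_nu$ has $\lambda_n$ convergent, hence $x_n$ convergent in $\eK$. Alternatively, one can use the quotient map $q\colon\eX\to\eX/\eK$: the set $q(\eM)$ is finite-dimensional and hence closed, so $\eK+\eM=q^{-1}(q(\eM))$ is closed.

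The only non-elementary step is the appeal to the closed graph (or open mapping) theorem in ($\Leftarrow$), and it requires completeness of $\eK+\eM$. That is exactly why closedness of the sum is the right hypothesis there.
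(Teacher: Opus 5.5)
Your proof is correct. The paper gives no argument of its own: it cites Kober's theorem \cite[Theorem 1]{Kob} for the equivalence, and the standard fact that a closed subspace plus a finite-dimensional subspace is closed for the last assertion.

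What you have written is a self-contained proof of Kober's theorem, built on the projection $P$ of $\eK+\eM$ onto $\eK$ along $\eM$. Your two estimates together give the exact identity $\|P\|=1/\sin\bigl(\angle(\eK,\eM)\bigr)$ when $\eK\neq\{0\}$. The paper carries out the same norm computation later, inside the proof of Lemma~\ref{lem02}, for the pair $\eR(T),\eN(T)$; it appeals to this lemma there only to know that the domain of that projection is closed.

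Your route makes the structure visible. The implication from positive angle to closedness of $\eK+\eM$ is elementary: it is a Cauchy-sequence argument using only the completeness of $\eK$ and $\eM$. The one place that needs Baire-category machinery (closed graph or open mapping) is the converse, and only there is completeness of $\eK+\eM$ needed.

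It also yields the paper's unproved remark that $\angle(\eK,\eM)>0$ if and only if $\angle(\eM,\eK)>0$. The complementary projection $I-P$ of $\eK+\eM$ onto $\eM$ along $\eK$ has norm $1/\sin\bigl(\angle(\eM,\eK)\bigr)$, and it is bounded exactly when $P$ is. The paper's citation gains only brevity.
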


\begin{proof}
The main statement in the lemma is \cite[Theorem 1]{Kob}. The rest follows from the fact that the sum of a subspace 
with a finite-dimensional subspace is closed.
\end{proof}

The problem of closedness of $\eK+\eM$ when $\eK\cap\eM\ne \{0\}$ is considered in \cite{ZD}.

If $\eK$ and $\eM$ are infinite-dimensional linear submanifolds of $\eX$, then $\eK\cap\eM=\{0\}$ does not imply 
$\angle(\eK,\eM)>0$, in general. Indeed, it is not hard to construct subspaces $\eK$ and $\eM$ in the separable
infinite-dimensional Hilbert space $\eH$ such that $\angle(\eK,\eM)=0$ and $\eK\cap\eM=\{0\}$.

Recall that an operator $A\in\pB(\eX)$ is left-invertible if there exists an operator $A^l$, called a left inverse of $A$,
such that $A^l A=I$. Notice that $AA^l$ is an idempotent. 
Similarly, $A$ is right-invertible if there exists an operator $A^r$, called a right inverse of $A$,
such that $A A^r=I$ (again, $A^rA$ is an idempotent). Left and right inverses need not be unique.
Of course, if $A$ is invertible, then they are unique, and both are equal to the inverse $A^{-1}$ of $A$. In this
case, $\kappa(A)=\|A\| \|A^{-1}\|\geq 1$ is the condition number of $A$. One has $\kappa(A)=1$ if and only if 
$A$ is a scalar multiple of an invertible isometry.

\begin{proposition} \label{prop01}
Let $\eK$ and $\eM$ be nontrivial linear submanifolds of $\eX$. 

\begin{itemize}
\item[(i)] If $S\in\pB(\eX)$ is a left-invertible operator with a left inverse $S^l$, then
$$\sin\bigl(\angle(S\eK,S\eM)\bigr)\leq \|S\|\|S^l\|\sin\bigl(\angle(\eK,\eM)\bigr).$$
\item[(ii)] 
If $S\in\pB(\eX)$ is a right-invertible operator with a right inverse $S^r$, then
$$\frac{1}{\|S\|\|S^r\|}\sin\bigl(\angle(S^rS\eK,S^rS\eM)\bigr)\leq \sin\bigl(\angle(S\eK,S\eM)\bigr).$$
\end{itemize}
\end{proposition}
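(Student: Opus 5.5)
The plan is to prove both inequalities directly from the definition \eqref{eq14}. For each admissible pair of points realizing the infimum on one side, I will build a normalized pair on the other side and compare norms using $S$ and its one-sided inverse. The only things to check are that every unit vector of the image submanifold arises from this construction, and that the degenerate conventions ($\sin=1$ when one submanifold is $\{0\}$) cause no trouble.

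For (i), $S$ is injective with $\|Sx\|\geq \|x\|/\|S^l\|$, since $x=S^lSx$; hence $S\eK$ and $S\eM$ are nontrivial. Every unit vector of $S\eK$ has the form $Sx/\|Sx\|$ with $x\in\eK$, $\|x\|=1$. Given such $x$ and any $y\in\eM$, put $u=Sx/\|Sx\|$ and $v=Sy/\|Sx\|\in S\eM$. Then
$$\|u-v\|=\frac{\|S(x-y)\|}{\|Sx\|}\leq \|S\|\|S^l\|\,\|x-y\|.$$
The left side is at least $\sin\bigl(\angle(S\eK,S\eM)\bigr)$. Taking the infimum over $x$ and $y$ gives (i).

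For (ii), set $P=S^rS$, which is idempotent, and note the identity $SP=SS^rS=S$. This gives $\|Px\|\geq \|Sx\|/\|S\|$ and $\|Px\|\leq \|S^r\|\|Sx\|$, so $P\eK=\{0\}$ if and only if $S\eK=\{0\}$, and likewise for $\eM$.

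First handle the degenerate cases. If $S\eK$ or $S\eM$ is trivial, then so is $P\eK$ or $P\eM$, and the left-hand side of (ii) equals $1/(\|S\|\|S^r\|)\leq 1$, since $\|S\|\|S^r\|\geq\|SS^r\|=1$. The right-hand side equals $1$, so the inequality holds. Otherwise, take $x\in\eK$ with $\|Sx\|=1$ and $y\in\eM$; every unit vector of $S\eK$ is such an $Sx$. Then $\|Px\|\geq 1/\|S\|>0$, and with $u=Px/\|Px\|\in P\eK$ and $v=Py/\|Px\|\in P\eM$ we get
$$\sin\bigl(\angle(P\eK,P\eM)\bigr)\leq\|u-v\|=\frac{\|S^r(Sx-Sy)\|}{\|Px\|}\leq \|S\|\|S^r\|\,\|Sx-Sy\|.$$
Taking the infimum over $x$ and $y$ yields (ii).

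The only step needing care is the lower bound $\|Px\|\geq\|Sx\|/\|S\|$ in (ii), which rests on $SP=S$. The rest is routine bookkeeping with the normalization and the conventions for trivial submanifolds.
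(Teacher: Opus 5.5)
Your proof is correct and follows the paper's argument. Part (i) uses the same normalization $u=Sx/\|Sx\|$, $v=Sy/\|Sx\|$ with $1/\|Sx\|\le\|S^l\|$, done pointwise rather than along a minimizing sequence, and your direct computation in (ii) is exactly part (i) applied to $S^r$ with left inverse $S$ and the submanifolds $S\eK$, $S\eM$, which is how the paper derives (ii). Your explicit treatment of the case where $S\eK$ or $S\eM$ is trivial, using $\|S\|\|S^r\|\ge 1$, fills a small point the paper passes over, since it invokes (i) there without checking nontriviality.
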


\begin{proof}
(i) By the definition of angle, there exists a sequence $(x_n)_{n=1}^{\infty}\subseteq\eK$,
$\| x_n\|=1$ for all $n\geq 1$, and a sequence $(y_n)_{n=1}^{\infty}\subseteq\eM$ such that
\begin{equation} \label{eq11}
\lim_{n\to\infty}\| x_n-y_n\|=\sin\bigl(\angle(\eK,\eM)\bigr).
\end{equation}
Denote
$$ u_n=\frac{1}{\|Sx_n\|} Sx_n\qquad\text{and}\qquad v_n=\frac{1}{\|Sx_n\|} Sy_n \qquad 
\text{for every}\; n\in\bN.$$
It is obvious that $u_n\in S\eK$, $\|u_n\|=1$, and $v_n\in S\eM$ for every $n\in\bN$.
Notice that $1=\|x_n\|=\|S^l Sx_n\|\leq\|S^l\| \|Sx_n\|$ for every $n\in\bN$ and therefore
\begin{equation} \label{eq12}
\sup_{n\in\bN}\frac{1}{\| Sx_n\|}\leq \|S^l\|.
\end{equation}
Now, using \eqref{eq11} and \eqref{eq12}, we have
\begin{align*}
\sin &\bigl(\angle(S\eK,S\eM)\bigr)\leq \limsup_{n\to\infty}\| u_n-v_n\|=
\limsup_{n\to\infty}\frac{\| S(x_n-y_n)\|}{\|Sx_n\|}\\
&\leq \sup_{n\in\bN}\frac{1}{\| Sx_n\|} \limsup_{n\to\infty}\| S(x_n-y_n)\|
\leq \|S^l\| \|S\|\lim_{n\to\infty}\| x_n-y_n\|\\
&=\|S^l\| \|S\|\sin\bigl(\angle(\eK,\eM)\bigr).
\end{align*}

(ii) If $S^r$ is a right inverse of $S$, then $S$ is a left inverse of $S^r$. Hence, by the first part of this proposition,
$$\sin\bigl(\angle(S^r\eK,S^r\eM)\bigr)\leq \|S\|\|S^r\|\sin\bigl(\angle(\eK,\eM)\bigr).$$
Replace $\eK$ with $S\eK$ and $\eM$ with $S\eM$ in this inequality. Then
$$\sin\bigl(\angle(S^rS\eK,S^rS\eM)\bigr)\leq \|S\|\|S^r\|\sin\bigl(\angle(S\eK,S\eM)\bigr)$$
and the assertion follows.
\end{proof}

The following corollary follows immediately from \Cref{prop01}.

\begin{corollary} \label{cor01}
Let $\eK$ and $\eM$ be nontrivial linear submanifolds of $\eX$. If $S\in\pB(\eX)$ is an invertible operator, then
$\frac{1}{\kappa(S)}\sin \bigl( \angle(\eK,\eM)\bigr)\leq \sin \bigl(\angle(S\eK,S\eM)\bigr)\leq 
\kappa(S)\sin \bigl(\angle(\eK,\eM)\bigr).$\qed
\end{corollary}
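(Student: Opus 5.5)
The plan is to derive both inequalities directly from \Cref{prop01}. The key point is that an invertible operator $S$ is both left- and right-invertible, and its left and right inverses both coincide with $S^{-1}$. Throughout, the subspaces $S\eK$ and $S\eM$ remain nontrivial because $S$ is injective, so the proposition applies to them as well as to $\eK$ and $\eM$.

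For the upper bound, I will apply \Cref{prop01}(i) with $S^l=S^{-1}$. This immediately gives
$$\sin\bigl(\angle(S\eK,S\eM)\bigr)\leq \|S\|\|S^{-1}\|\sin\bigl(\angle(\eK,\eM)\bigr)=\kappa(S)\sin\bigl(\angle(\eK,\eM)\bigr).$$

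For the lower bound, I will apply \Cref{prop01}(ii) with $S^r=S^{-1}$. Then $S^rS=I$, so $S^rS\eK=\eK$ and $S^rS\eM=\eM$. The inequality in (ii) becomes
$$\frac{1}{\kappa(S)}\sin\bigl(\angle(\eK,\eM)\bigr)\leq \sin\bigl(\angle(S\eK,S\eM)\bigr),$$
which is exactly the left-hand estimate. As an alternative route, one could apply part (i) to $S^{-1}$ acting on the subspaces $S\eK$ and $S\eM$.

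There is no real obstacle here. The only thing to check is the nontriviality hypothesis on $S\eK$ and $S\eM$, and that follows from the injectivity of $S$.
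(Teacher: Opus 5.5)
Your proposal is correct and is exactly the immediate derivation the paper intends: part (i) of \Cref{prop01} with $S^l=S^{-1}$ gives the upper bound, and part (ii) with $S^r=S^{-1}$, so that $S^rS=I$, gives the lower bound. Your alternative route of applying (i) to $S^{-1}$ on $S\eK$ and $S\eM$ is just how the paper proves (ii), and injectivity of $S$ keeps $S\eK$ and $S\eM$ nontrivial, as you note.
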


\section{Elementary operators} \label{Sec03}
\setcounter{theorem}{0}

For operators $A,B\in\pB(\eX)$, let $L_A\colon X\mapsto AX$ and $R_B\colon X\mapsto XB$ denote
the left and the right multiplication operators on $\pB(\eX)$ induced by $A$ and $B$, respectively.
It is easily seen that $\|L_A\|=\|A\|$ and $\|R_B\|=\|B\|$. Let $n$ be a positive integer and let 
$\bdA=(A_1,\ldots,A_n)$ and $\bdB=(B_1,\ldots,B_n)$ be $n$-tuples of linearly independent operators on $\eX$.
The elementary operator on $\pB(\eX)$ induced by $\bdA$ and $\bdB$ is given by 
$\Gamma_{\bdA,\bdB}=L_{A_1} R_{B_1}+\cdots+L_{A_n} R_{B_n}$. We will also use the 
following notation: $\mu_{A,B}=L_A R_B$, $\delta_{A,B}=L_A-R_B$, $\delta_A=\delta_{A,A}$, 
$\tau_{A,B}=\mu_{A,B}-\mu_{I,I}$, and $\Delta_{\bdA,\bdB}=\mu_{A_1,B_1}-\mu_{A_2,B_2}$ for pairs of 
operators $\bdA=(A_1, A_2)$ and $\bdB=(B_1, B_2)$. 
Since a left multiplication commutes with every right multiplication, we see that $\mu_{A_1,B_1}$
and $\mu_{A_2,B_2}$ commute if $\bdA=(A_1,A_2)$ and $\bdB=(B_1,B_2)$ are commuting pairs.

For a nonempty set of operators $\pS\subseteq \pB(\eX)$, let $\pS'=\{T\in\pB(\eX);\;ST=TS\;\text{for all}\; S\in\pS\}$ 
denote its commutant and $\pS''=(\pS')'$ its double commutant. The following lemma is well-known.

\begin{lemma} \label{lem03}
Let $\bdA=(A_1,\ldots, A_n)$ and $\bdB=(B_1,\ldots,B_n)$ be two $n$-tuples of operators in $\pB(\eX)$.
If $\bdC=(C_1,\ldots,C_k)$ is a $k$-tuple of operators in $\{A_1,\ldots,A_n\}'$ and
$\bdD=(D_1,\ldots,D_k)$ is a $k$-tuple of operators in $\{B_1,\ldots,B_n\}'$, then the null space 
$\eN(\Gamma_{\bdA,\bdB})$ and the closure of the range $\eR(\Gamma_{\bdA,\bdB})$ are invariant subspaces 
for $\Gamma_{\bdC,\bdD}$. In particular, if $\bdA$ and $\bdB$ are commuting pairs of operators, then 
$\eN(\Delta_{\bdA,\bdB})$ and $\overline{\eR(\Delta_{\bdA,\bdB})}$ are invariant for operators $L_{A_i}$, 
$R_{B_i}$, and $\mu_{A_i,B_i}$ $(i=1,2)$.
\end{lemma}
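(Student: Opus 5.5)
The plan is to show that $\Gamma_{\bdC,\bdD}$ commutes with $\Gamma_{\bdA,\bdB}$. Once that is done, invariance of the null space and of the closure of the range is the standard fact for commuting bounded operators.

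\emph{Commutation.} The basic step concerns single terms. Fix indices $i\in\{1,\ldots,n\}$ and $j\in\{1,\ldots,k\}$. For $X\in\pB(\eX)$ we compute
\[
\mu_{C_j,D_j}\mu_{A_i,B_i}(X)=C_jA_iXB_iD_j=A_iC_jXD_jB_i=\mu_{A_i,B_i}\mu_{C_j,D_j}(X).
\]
This uses $C_jA_i=A_iC_j$, since $C_j\in\{A_1,\ldots,A_n\}'$, and $B_iD_j=D_jB_i$, since $D_j\in\{B_1,\ldots,B_n\}'$. Summing over $i$ and $j$ gives $\Gamma_{\bdC,\bdD}\Gamma_{\bdA,\bdB}=\Gamma_{\bdA,\bdB}\Gamma_{\bdC,\bdD}$.

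\emph{Invariance.} Write $\Gamma=\Gamma_{\bdA,\bdB}$ and $\Gamma'=\Gamma_{\bdC,\bdD}$.
\begin{itemize}
\item If $\Gamma(X)=0$, then $\Gamma(\Gamma'(X))=\Gamma'(\Gamma(X))=0$, so $\eN(\Gamma)$ is invariant under $\Gamma'$.
\item For the range, $\Gamma'(\Gamma(Y))=\Gamma(\Gamma'(Y))\in\eR(\Gamma)$, so $\Gamma'\eR(\Gamma)\subseteq\eR(\Gamma)$.
\item Since $\Gamma'$ is bounded, hence continuous, it follows that $\Gamma'\overline{\eR(\Gamma)}\subseteq\overline{\Gamma'\eR(\Gamma)}\subseteq\overline{\eR(\Gamma)}$.
\end{itemize}
One point needs a remark. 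The definition of $\Gamma_{\bdA,\bdB}$ in the paper assumes linearly independent tuples, but the computation above never uses that assumption. So we may also apply it with $k=1$ and tuples containing $I$.

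\emph{The particular case.} Let $\bdA=(A_1,A_2)$ and $\bdB=(B_1,B_2)$ be commuting pairs, and write $\Delta_{\bdA,\bdB}=\Gamma_{(A_1,-A_2),(B_1,B_2)}$.
\begin{itemize}
\item Taking $\bdC=(A_i)$ and $\bdD=(I)$ gives $L_{A_i}$. Here $A_i$ commutes with $A_1$ and $A_2$ because the pair is commuting, and $I$ commutes with everything.
\item Taking $\bdC=(I)$ and $\bdD=(B_i)$ gives $R_{B_i}$ in the same way.
\item Taking $\bdC=(A_i)$ and $\bdD=(B_i)$ gives $\mu_{A_i,B_i}$.
\end{itemize}
The general statement then yields invariance of $\eN(\Delta_{\bdA,\bdB})$ and $\overline{\eR(\Delta_{\bdA,\bdB})}$ under $L_{A_i}$, $R_{B_i}$ and $\mu_{A_i,B_i}$ for $i=1,2$.

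There is no real obstacle in this argument. The only point requiring care is the closure of the range, and continuity of $\Gamma'$ handles it.
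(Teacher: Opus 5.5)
Your proposal is correct and follows essentially the same route as the paper: you check termwise that $\mu_{C_j,D_j}$ commutes with $\mu_{A_i,B_i}$, which gives $\Gamma_{\bdC,\bdD}\Gamma_{\bdA,\bdB}=\Gamma_{\bdA,\bdB}\Gamma_{\bdC,\bdD}$, and then deduce invariance of the null space and of the closure of the range (the paper simply cites hyperinvariance for this step). Your explicit choices of $\bdC,\bdD$ for $L_{A_i}$, $R_{B_i}$, $\mu_{A_i,B_i}$, together with your remark that linear independence of the tuples is never used, spell out details the paper leaves implicit.
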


\begin{proof}
It is well-known that the null space and the closure of the range of an operator $T$ are hyperinvariant subspaces
for $T$, that is, invariant for every operator in the commutant $\{T\}'$. Since $C_i\in \{A_1,\ldots,A_n\}'$ and
$D_i\in \{B_1,\ldots,B_n\}'$, the multiplication operators $\mu_{C_i,D_i}$ and $\mu_{A_j,B_j}$ commute for
all $1\leq i\leq k$ and $1\leq j\leq n$. It follows that $\Gamma_{\bdC,\bdD}$ commutes with $\Gamma_{\bdA,\bdB}$.
Thus, $\eN(\Gamma_{\bdA,\bdB})$ and  $\overline{\eR(\Gamma_{\bdA,\bdB})}$ are invariant for 
$\Gamma_{\bdC,\bdD}$. If $\bdA$ and $\bdB$ are commuting $n$-tuples of operators, then 
$A_j\in\{A_1,\ldots, A_n\}'$ and $B_j\in\{B_1,\ldots, B_n\}'$ for each $j=1,\ldots,n$, and therefore the last part 
of the lemma follows.
\end{proof}

It is not hard to characterize multiplication operators $\mu_{A,B}$ with ascent $k\in\bN\cup\{0\}$. 
We have the following result.

\begin{proposition} \label{prop02}
Let $k$ be a non-negative integer. If $A, B\in\pB(\eX)$ are nonzero operators such that $A^k\ne 0$ and $B^k\ne 0$,
then $\asc(\mu_{A,B})\leq k$ if and only if $\asc(A)\leq k$ and $\asc(B^*)\leq k$. In particular,
$\asc(\mu_{A,B})=\max\{\asc(A),\asc(B^*)\}$.
\end{proposition}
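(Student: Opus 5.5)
Since $\mu_{A,B}^j=\mu_{A^j,B^j}$, the whole statement reduces to describing the null space of $\mu_{A^j,B^j}$ and comparing the chains for $j=k$ and $j=k+1$. We will use the equivalence $\asc(T)\le k\iff \eN(T^{k+1})=\eN(T^k)$ throughout. We also use the duality fact that $\eN(B^{*j})$ is the annihilator of $\overline{\eR(B^j)}$. Hence $\asc(B^*)\le k$ if and only if $\overline{\eR(B^{k+1})}=\overline{\eR(B^k)}$: an inclusion of closed subspaces is strict exactly when the annihilators differ, by Hahn--Banach.

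First we show that $\asc(A)\le k$ and $\asc(B^*)\le k$ imply $\asc(\mu_{A,B})\le k$. Suppose $A^{k+1}XB^{k+1}=0$. Then $A^{k+1}XB^{k+1}x=0$ for every $x$, so $XB^{k+1}x\in\eN(A^{k+1})=\eN(A^k)$. Hence $A^kXB^{k+1}=0$. Therefore $A^kX$ vanishes on $\eR(B^{k+1})$, and by continuity on $\overline{\eR(B^{k+1})}=\overline{\eR(B^k)}\supseteq\eR(B^k)$. This gives $A^kXB^k=0$.

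For the converse, assume for instance that $\asc(A)>k$. Pick $x_0\in\eN(A^{k+1})\setminus\eN(A^k)$. We want $X$ with $A^{k+1}XB^{k+1}=0$ but $A^kXB^k\ne0$. Try a rank-one operator $X=x_0\otimes\xi$, that is, $Xx=\langle x,\xi\rangle x_0$. Then $A^{k+1}XB^{k+1}=0$ automatically, and $A^kXB^kx=\langle B^kx,\xi\rangle A^kx_0$. Since $B^k\ne0$ there is $\xi$ with $B^{k*}\xi\neq 0$, so this is nonzero. Symmetrically, if $\asc(B^*)>k$, pick $\xi_0\in\eN(B^{*(k+1)})\setminus\eN(B^{*k})$ and put $X=x\otimes\xi_0$ with $A^kx\ne0$, which exists since $A^k\ne0$. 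Then $XB^{k+1}=0$ while $A^kXB^k=(A^kx)\otimes B^{*k}\xi_0\ne0$. This is exactly where the hypotheses $A^k\ne0$ and $B^k\ne0$ are needed.

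For the ``in particular'' part, if $\asc(A)$ and $\asc(B^*)$ are both finite, take $m=\max$. The equivalence applied at $k=m$ gives $\asc(\mu_{A,B})\le m$. Applying it at $k=m-1$ shows the ascent is not $\le m-1$. At each level $k$ we must check $A^k\ne0$ and $B^k\ne0$, which follows because $A,B$ are nonzero and of ascent greater than $k$ or nilpotent of larger index; this needs some care. If one ascent is infinite, the converse direction at every $k$ gives $\asc(\mu_{A,B})=\infty$.

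The main delicate point is the nonvanishing requirement in this last step, in degenerate nilpotent cases. Suppose $A^j=0$ for some $j$ but $\asc(B^*)$ is large. Then $\mu_{A,B}^j=0$, so $\asc(\mu_{A,B})\le j$, and the formula with $\max$ must be read under the standing hypothesis. I would handle this by noting that if $A^k=0$ then $\asc(A)\le k$ already. So I would restrict the ``in particular'' claim to the indices $k$ where both powers are nonzero, which is exactly the regime the proposition's hypotheses cover.
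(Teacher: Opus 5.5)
Your proof of the equivalence is correct and is essentially the paper's. The forward direction uses the same rank-one operators $x_0\otimes\xi$ and $x\otimes\xi_0$. Your continuity argument via $\overline{\eR(B^{k+1})}=\overline{\eR(B^k)}$ is the predual form of the paper's adjoint step ${B^*}^{k+1}X^*{A^*}^k=0\Rightarrow{B^*}^kX^*{A^*}^k=0$, and it also covers the case $k=0$, which the paper treats separately.

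Your caveat on the ``in particular'' clause is justified, and the paper's one-line derivation overlooks it: it applies the first part at $k_1=\asc(\mu_{A,B})$ without checking $A^{k_1}\ne0\ne B^{k_1}$. A counterexample: take $A=N_2\oplus0$ and $B=0\oplus N_5$ on $\bC^2\oplus\bC^5$, with $N_j$ the nilpotent Jordan block. Then $\mu_{A,B}^2=0$, so $\asc(\mu_{A,B})\le2<5=\max\{\asc(A),\asc(B^*)\}$.

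Two fixes to your write-up follow from this. Drop the middle-paragraph claim that nonvanishing of both powers ``follows'': at level $m-1$, only the operator attaining the maximal ascent is guaranteed a nonzero $(m-1)$st power. Also, your infinite-ascent case needs the same restriction, for example to non-nilpotent $A,B$.
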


\begin{proof}
First, we consider the case $k=0$. We have to prove that $\mu_{A,B}$ is injective if and only if $A$ and $B^*$ are
injective. If $A$ is not injective, then there exists a nonzero $e\in\eN(A)$. Let $0\ne\xi\in\eX^*$ be arbitrary.
Then $e\otimes\xi$ is a nonzero rank-one operator such that $A(e\otimes\xi)B=0$. Hence, $\mu_{A,B}$ is not
injective. Similarly, if $B^*$ is not injective, then there exists a nonzero $\xi\in\eN(B^*)$ and one has
$A(e\otimes\xi)B=0$ for every $0\ne e\in\eX$, showing that $\mu_{A,B}$ is not injective. Suppose now that
$\mu_{A,B}$ is not injective. Then there exists a nonzero $X\in\pB(\eX)$ such that $AXB=0$. If $XB\ne 0$,
then $\{0\}\ne\eR(XB)\subseteq\eN(A)$ and therefore $A$ is not injective. On the other hand, if $XB=0$, then
$B^*X^*=0$ and therefore $\{0\}\ne \eR(X^*)\subseteq\eN(B^*)$, which means that $B^*$ is not injective.
Thus, the equality $\asc(\mu_{A,B})=\max\{\asc(A),\asc(B^*)\}$ holds in this case.

In the rest of the proof, assume $k\geq 1$. Suppose that $\asc(\mu_{A,B})\leq k$. Hence, if $A^{k+1}XB^{k+1}=0$ 
for an operator $X\in\pB(\eX)$, 
then $A^k XB^k=0$. Let $e\in\eN(A^{k+1})$ be arbitrary and let $\xi\in\eX^*$ be such that
${B^*}^k\xi\ne 0$. For a rank-one operator $X=e\otimes \xi$, we have
$$ A^{k+1}XB^{k+1} x=\langle B^{k+1}x,\xi\rangle A^{k+1} e=0\qquad\text{for all}\; x\in\eX,$$
that is, $A^{k+1}XB^{k+1}=0$. By the assumption, $A^k XB^k=0$, which gives 
$$ \langle B^k x,\xi\rangle A^ke=0\qquad\text{for all}\; x\in\eX.$$
Since ${B^*}^k\xi\ne 0$, there exists $x\in\eX$ such that $\langle B^kx,\xi\rangle\ne 0$. It follows that $A^ke=0$
and consequently $\asc(A)\leq k$. Now we will prove that $\asc(B^*)\leq k$. Suppose that $\xi\in\eN({B^*}^{k+1})$
and let $e\in\eX$ be such that $A^ke\ne 0$. As before, for $X=e\otimes\xi$, we have 
$$A^{k+1}XB^{k+1} x=\langle x,{B^*}^{k+1}\xi\rangle A^{k+1}e=0\qquad\text{for all}\; x\in\eX,$$
and therefore $A^k XB^k=0$ which gives
$$ \langle x,{B^*}^k\xi\rangle A^ke=0\qquad\text{for all}\; x\in\eX.$$
Since $A^k e\ne 0$, we conclude that ${B^*}^k\xi=0$. This proves $\asc(B^*)\leq k$.

To prove the opposite implication, assume that $\asc(A)\leq k$ and $\asc(B^*)\leq k$. 
Let $X\in\eN(\mu_{A,B}^{k+1})$ be arbitrary. Then $A^{k+1}(XB^{k+1} x)=0$, that is, 
$XB^{k+1}x\in\eN(A^{k+1})$, for every $x\in\eX$. Since $\asc(A)\leq k$,
we have $A^kXB^{k+1}x=0$ for all $x\in \eX$, i.e., $A^k XB^{k+1}=0$. 
Taking adjoints yields ${B^*}^{k+1}X^*{A^*}^k=0$.
Hence, $X^*{A^*}^k\xi\in\eN({B^*}^{k+1})$ for every $\xi\in\eX^*$. Now $\asc(B^*)\leq k$ forces
$X^*{A^*}^k\xi\in\eN({B^*}^k)$ for every $\xi\in\eX^*$. We conclude that ${B^*}^k X^*{A^*}^k=0$ 
and consequently $A^k XB^k=0$, which proves that $\asc(\mu_{A,B})\leq k$.

If $\asc(\mu_{A,B})=k_1$, then the first part of the above proof gives $\max\{\asc(A),\asc(B^*)\}\leq k_1$. If
$\max\{\asc(A),\asc(B^*)\}\leq k_2<k_1$, the second part of the above proof would give 
$\asc(\mu_{A,B})\leq k_2$ which is a contradiction. Thus, $\asc(\mu_{A,B})=\max\{\asc(A),\asc(B^*)\}$.
\end{proof}

The reduced minimum modulus of a nonzero operator $T\in\pB(\eX)$ is given by
$$ \gamma(T)=\inf\{ \| Tx\|;\; x\in\eX,\; \dist(x,\eN(T))=1\}$$
with the convention $\gamma(0)=\infty$. By \cite[\S 10, Theorem 2]{Mul}, the range $\eR(T)$ is closed if and only if 
$\gamma(T)>0$.

\begin{lemma} \label{lem02}
Let $T\in\pB(\eX)$ be a nonzero operator. If the range $\eR(T)$ is closed, then
$$ \| Tv\|\geq \sin\big(\angle(\eR(T),\eN(T))\big) \gamma(T)\|v\|\qquad \text{for every}\; v\in\eR(T).$$
\end{lemma}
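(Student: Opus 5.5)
The plan is to combine the definition of $\gamma(T)$ with the angle inequality \eqref{eq08}. Fix $v\in\eR(T)$. If $v=0$ there is nothing to prove, so assume $v\ne 0$. By the definition of the reduced minimum modulus, applied to the vector $v/\dist(v,\eN(T))$ (note $\dist(v,\eN(T))>0$ whenever $v\notin\eN(T)$), we get the standard inequality
$$\|Tv\|\geq \gamma(T)\,\dist(v,\eN(T)),$$
which in fact holds for every $v\in\eX$. If $v\in\eN(T)$, then $v\in\eR(T)\cap\eN(T)$. In that case either the angle is zero, so the right-hand side vanishes and the claim is trivial, or $v=0$. Hence we may assume the distance is positive.

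The second step is to bound $\dist(v,\eN(T))$ from below by the angle. For every $y\in\eN(T)$, inequality \eqref{eq08} with $\eK=\eR(T)$ and $\eM=\eN(T)$ gives $\|v-y\|\geq\sin\bigl(\angle(\eR(T),\eN(T))\bigr)\|v\|$. Taking the infimum over $y\in\eN(T)$ yields $\dist(v,\eN(T))\geq \sin\bigl(\angle(\eR(T),\eN(T))\bigr)\|v\|$. If $\eN(T)=\{0\}$, the convention $\sin=1$ is consistent with $\dist(v,\{0\})=\|v\|$. Since $T\ne0$, we have $\eR(T)\ne\{0\}$, so the definition \eqref{eq14} applies on the $\eK$ side. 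Combining the two steps gives the claim.

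The only point needing care is the convention $\gamma(T)=\infty$, which is excluded because $T\ne0$, together with the degenerate cases just mentioned. Closedness of $\eR(T)$ is used only to ensure that $\gamma(T)>0$, which makes the estimate meaningful; the inequality itself does not otherwise depend on it. I do not expect a real obstacle. The main thing to verify is the homogeneity argument giving $\|Tv\|\geq\gamma(T)\dist(v,\eN(T))$. Since $\dist(\lambda v,\eN(T))=|\lambda|\dist(v,\eN(T))$, normalizing $v$ so that its distance to $\eN(T)$ equals one reduces this to the definition of $\gamma(T)$.
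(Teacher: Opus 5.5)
Your proof is correct, and it obtains the key estimate $\dist(v,\eN(T))\geq \sin\bigl(\angle(\eR(T),\eN(T))\bigr)\|v\|$ more directly than the paper does.

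Both proofs start from $\|Tv\|\geq\gamma(T)\dist(v,\eN(T))$. They differ in the second step. The paper first disposes of the case $\sin\bigl(\angle(\eR(T),\eN(T))\bigr)=0$. It then uses $\eN(T)\cap\eR(T)=\{0\}$ and Kober's theorem (\Cref{lem04}, which is where closedness of $\eR(T)$ enters) to show that $\eY=\eN(T)+\eR(T)$ is a closed direct sum. Next it introduces the projection $P$ of $\eY$ onto $\eR(T)$ along $\eN(T)$ and computes $\|P\|=1/\sin\bigl(\angle(\eR(T),\eN(T))\bigr)$. It concludes via $\|v\|\leq\|P\|\dist(v,\eN(T))$.

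You instead take the infimum over $y\in\eN(T)$ in \eqref{eq08}. That is exactly the content of the paper's chain of equalities for $\|P\|$, so neither the projection nor Kober's theorem is needed. As you note, the inequality then holds with no closed-range hypothesis at all; without it, $\gamma(T)=0$ and the bound is vacuous.

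What the paper's detour adds is a geometric interpretation: the sine of the angle is the reciprocal of the norm of the oblique projection onto $\eR(T)$ along $\eN(T)$. That interpretation is not used elsewhere in the paper. Your separate treatment of the case $v\in\eN(T)$ is harmless but unnecessary, since both of your inequalities already hold for every $v\in\eR(T)$.
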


\begin{proof}
If $\sin\big(\angle(\eR(T),\eN(T))\big)=0$, the inequality is trivial. Assume therefore that 
$\sin\big(\angle(\eR(T),\eN(T))\big)>0$. This immediately implies that $\eN(T)\cap\eR(T)=\{0\}$. Hence, 
the definition of the reduced minimum modulus gives
\begin{equation} \label{eq10}
\| Tv\|\geq \gamma(T)\dist(v,\eN(T))\qquad \text{for all}\; v\in\eR(T).
\end{equation}

By \Cref{lem04}, $\eY:=\eN(T)+\eR(T)$ is a subspace of $\eX$, more precisely, $\eY$ is a direct sum of $\eN(T)$
and $\eR(T)$ since $\eN(T)\cap\eR(T)=\{0\}$. Hence, every $y\in\eY$ has a unique
decomposition $y=u+v$, where $u\in\eN(T)$ and $v\in\eR(T)$. Define $P\colon \eY\to\eY$ by $Py=v$,
that is, $P$ is a projection on $\eY$ mapping onto $\eR(T)$ along $\eN(T)$. For the norm of $P$ we have
\begin{align*}
\|P\|&=\sup\{ \tfrac{\|Py\|}{\|y\|};\; y\in\eY,y\ne 0\}\\
&=\sup\{\tfrac{\|v\|}{\|u+v\|};\; u\in\eN(T), v\in\eR(T), u+v\ne 0\}\\
&=\sup\{\tfrac{1}{\|u+v\|};\; u\in\eN(T), v\in\eR(T), \|v\|=1\}\\
&=\sup\{ \sup\{\tfrac{1}{\|u+v\|};\; u\in\eN(T)\};\;v\in\eR(T), \|v\|=1\}\\
&=\sup\{ \tfrac{1}{\inf\{\|u+v\|;\; u\in\eN(T)\}};\;v\in\eR(T), \|v\|=1\}\\
&=\sup\{ \tfrac{1}{\dist(v,\eN(T))};\;v\in\eR(T), \|v\|=1\}\\
&=\frac{1}{\inf\{\dist(v,\eN(T));\;v\in\eR(T), \|v\|=1\}}\\
&=\frac{1}{\inf\{\|u+v\|;\; u\in \eN(T), v\in\eR(T), \|v\|=1\}}\\
&=\frac{1}{\sin\big(\angle(\eR(T),\eN(T))\big)}.
\end{align*}

Let $v\in\eR(T)$ and $u\in\eN(T)$ be arbitrary. Then $\|v\|=\|P(u+v)\|\leq\| P\|\|u+v\|$. Taking the infimum over all
$u\in\eN(T)$ we have
$$\| v\|\leq \|P\|\dist(v,\eN(T))=\frac{\dist(v,\eN(T))}{\sin\big(\angle(\eR(T),\eN(T))\big)}.$$
Using \eqref{eq10} gives
$$\|Tv\|\geq\gamma(T)\dist(v,\eN(T))\geq \sin\big(\angle(\eR(T),\eN(T))\big)\gamma(T)\|v\|.$$
\end{proof}

In order to slightly simplify the notation, let $\theta_T$ denote the angle between the null space and the range
of an operator $T$, that is,
$\sin\theta_T=\inf\{\|e-v\|;\; e\in\eN(T), \|e\|=1, v\in \eR(T)\}$.

\begin{theorem} \label{theo01}
Let $A, B\in \pB(\eX)$ be nonzero operators.

(a) If $\theta_{\mu_{A,B}}>0$, then $\theta_A>0$ and $\theta_{B^*}>0$.

(b) If $\theta_A>0$ and $\theta_{B^*}>0$ and ranges $\eR(A)$ and $\eR(B)$ are closed, then 
$\theta_{\mu_{A,B}}>0$. 
\end{theorem}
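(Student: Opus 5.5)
For (a) I would argue by contraposition with rank-one operators, using $\|x\otimes\xi\|=\|x\|\|\xi\|$. Suppose $\theta_A=0$. Then there are unit vectors $e_n\in\eN(A)$ and vectors $x_n\in\eX$ with $\|e_n-Ax_n\|\to 0$. Since $B\neq 0$, I fix $\eta\in\eX^*$ with $B^*\eta\neq 0$ and set $X_n=e_n\otimes B^*\eta$. Because $Ae_n=0$ we have $AX_nB=0$, so $X_n\in\eN(\mu_{A,B})$, and $\|X_n\|=\|B^*\eta\|$. On the other hand $A(x_n\otimes\eta)B=Ax_n\otimes B^*\eta$ lies in $\eR(\mu_{A,B})$. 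Hence
\[
\|X_n-A(x_n\otimes\eta)B\|=\|e_n-Ax_n\|\,\|B^*\eta\|\to 0,
\]
and after normalising $X_n$ this gives $\theta_{\mu_{A,B}}=0$.

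The case $\theta_{B^*}=0$ is symmetric. Take unit $\xi_n\in\eN(B^*)$ and $\eta_n$ with $\|\xi_n-B^*\eta_n\|\to0$, and fix $y$ with $Ay\neq0$. Then $Ay\otimes\xi_n\in\eN(\mu_{A,B})$, since $\mu_{A,B}(Ay\otimes\xi_n)=A^2y\otimes B^*\xi_n=0$. Compare it with $A(y\otimes\eta_n)B=Ay\otimes B^*\eta_n$, which lies in the range, and the same estimate gives $\theta_{\mu_{A,B}}=0$.

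For (b) I would show directly that there is a constant $C$ with $\|X\|\le C\varepsilon$ whenever $X\in\eN(\mu_{A,B})$, $Y\in\pB(\eX)$ and $\varepsilon=\|X-AYB\|$. This gives $\sin\theta_{\mu_{A,B}}\ge 1/C$. Write $s_A=\sin\theta_A$ and $s_B=\sin\theta_{B^*}$, and let $W=X-AYB$. The proof has three steps.

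\emph{Step 1: control of $X$ on $\eR(B)$.} Because $AXB=0$, for every $x$ we have $XBx\in\eN(A)$, while $AYBx\in\eR(A)$. By \eqref{eq08},
\[
s_A\|XBx\|\le\|XBx-AYBx\|=\|Wx\|\le\varepsilon\|x\|.
\]
Since $\eR(B)$ is closed, $\gamma(B)>0$, and the left side depends only on $Bx$. So I may replace $x$ by $x+k$ with $k\in\eN(B)$ and take the infimum, getting $\|x\|$ down to about $\dist(x,\eN(B))\le\|Bx\|/\gamma(B)$. This yields
\[
\|XBx\|\le c\,\varepsilon\|Bx\| \quad\text{for all } x,\qquad c=\frac{1}{s_A\gamma(B)}.
\]
This is the step where closedness of the range is genuinely used.

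\emph{Step 2: dual side.} Fix $\xi\in\eX^*$. The functional $X^*\xi$ restricted to $\eR(B)$ has norm at most $c\varepsilon\|\xi\|$, by Step 1. Extend this restriction by Hahn--Banach to $\varphi\in\eX^*$ with $\|\varphi\|\le c\varepsilon\|\xi\|$. Then $X^*\xi-\varphi$ annihilates $\eR(B)$, so $X^*\xi-\varphi\in\eN(B^*)$. Moreover $B^*Y^*A^*\xi\in\eR(B^*)$ and $X^*\xi-B^*Y^*A^*\xi=W^*\xi$, so
\[
\|(X^*\xi-\varphi)-B^*Y^*A^*\xi\|\le(1+c)\varepsilon\|\xi\|.
\]
Applying \eqref{eq08} with $\theta_{B^*}$ gives $\|X^*\xi-\varphi\|\le (1+c)\varepsilon\|\xi\|/s_B$.

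\emph{Step 3: conclusion.} From Step 2, $\|X^*\xi\|\le\bigl(c+(1+c)/s_B\bigr)\varepsilon\|\xi\|$ for every $\xi$. Since $\|X\|=\|X^*\|$, this gives the desired bound with $C=c+(1+c)/s_B$.

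I expect the main obstacle to be Step 1. There the naive estimate only controls $\|XBx\|$ by $\varepsilon\|x\|$, and it must be upgraded to a bound in terms of $\|Bx\|$ using $\gamma(B)>0$. The Hahn--Banach step then transfers the information to the dual, where $\theta_{B^*}$ can be used. Closedness of $\eR(A)$ does not appear to be needed in this route, though it would be needed for a symmetric argument with the roles of $A$ and $B$ swapped.
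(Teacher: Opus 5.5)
Your proof is correct. Part (a) is essentially the paper's argument with the same rank-one operators, but (b) takes a genuinely different route.

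\textbf{The paper's route for (b).} The paper argues by contradiction. It takes sequences $T_n\in\eN(\mu_{A,B})$ with $\|T_n\|=1$ and $Y_n=AX_nB$ with $\|T_n-Y_n\|\to0$. It applies \eqref{eq08} for $\theta_{B^*}$ on the dual side to the vectors $T_n^*A^*\xi\in\eN(B^*)$ and $Y_n^*A^*\xi\in\eR(B^*)$. This gives $\|T_n^*A^*\|\to0$, and hence $\|AY_n\|\to0$. It then invokes \Cref{lem02}, which combines $\theta_A>0$ with $\gamma(A)>0$ to show that $A$ is bounded below on $\eR(A)$. Since $\eR(Y_n)\subseteq\eR(A)$, this forces $\|Y_n\|\to0$, contradicting $\|Y_n\|\to1$.

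\textbf{Your route for (b).} You use $\theta_A$ only pointwise, through \eqref{eq08}. You then let $\gamma(B)>0$ convert the estimate $s_A\|XBx\|\le\varepsilon\|x\|$ into one relative to $\|Bx\|$. Finally, a Hahn--Banach extension splits $X^*\xi$ into a small functional plus an element of $\eN(B^*)=\eR(B)^\perp$, where $\theta_{B^*}$ applies.

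\textbf{Comparison.} Your argument is direct rather than by contradiction. It bypasses \Cref{lem02}, and with it Kober's theorem and the projection-norm computation. It also yields the explicit bound $\sin\theta_{\mu_{A,B}}\geq 1/C$ with $C=c+(1+c)/s_B$ and $c=1/(s_A\gamma(B))$. The paper's argument is shorter once \Cref{lem02} is available.

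\textbf{A further observation.} The two proofs use complementary halves of the closed-range hypothesis. The paper's argument only uses that $\eR(A)$ is closed; the positivity of $\gamma(B^*)$ that it records is never used. Yours only uses that $\eR(B)$ is closed. Taken together, they show that (b) already holds when just one of $\eR(A)$, $\eR(B)$ is closed.
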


\begin{proof}
(a) Suppose, by contradiction, that $\theta_A=0$. Then there exist sequences 
$(e_n)_{n=1}^{\infty}\subseteq \eN(A)$ and $(f_n)_{n=1}^{\infty}\subseteq \eR(A)$ such that
$\|e_n\|=1$ and $\lim\limits_{n\to\infty}\| e_n-f_n\|=0$. For each $n\in\bN$, choose $u_n\in\eX$ with $Au_n=f_n$,
i.e., $u_n$ is from the preimage $A^{-1}(f_n)$.
Fix $\xi\in\eR(B^*)$ with $\|\xi\|=1$ and define rank-one operators $T_n=e_n\otimes\xi$ $(n\in\bN)$. Then
$\|T_n\|=1$ and $ AT_nBx=\langle Bx,\xi\rangle Ae_n=0$ for all $x\in\eX$, i.e., $T_n\in\eN(\mu_{A,B})$.
Let $\eta\in\eX^*$ be such that $\xi=B^*\eta$. 
Define $X_n=u_n\otimes \eta$ $(n\in\bN)$. Then $AX_nB=Au_n\otimes B^*\eta=f_n\otimes \xi$.
Hence,
$$\lim_{n\to\infty}\| T_n-AX_nB\|=\lim_{n\to\infty}\| e_n\otimes\xi-f_n\otimes\xi\|=
\lim_{n\to\infty}\| e_n-f_n\|\|\xi\|=0.$$
This implies  $\theta_{\mu_{A,B}}=0$. 
Hence, $\theta_{\mu_{A,B}}>0$ implies $\theta_A>0$. A similar argument shows $\theta_{B^*}>0$ whenever 
$\theta_{\mu_{A,B}}>0$.

(b) Since $\eR(A)$ and $\eR(B)$ are closed, the ranges of the adjoint operators are closed as well (see, for instance,
Theorems 2 and 3 in \S 10 of \cite{Mul}). It follows that the reduced minimum moduli $\gamma(A)$ and 
$\gamma(B^*)$ are positive numbers. By \Cref{lem02}, there exists a constant $C_A>0$ such that
\begin{equation} \label{eq09}
\|Av\|\geq C_A\|v\|\qquad \text{for all}\; v\in\eR(A).
\end{equation}

Suppose for contradiction that $\theta_{\mu_{A,B}}=0$. Then there exist sequences of operators
$(T_n)_{n=1}^{\infty}\subseteq \eN(\mu_{A,B})$ and $(Y_n)_{n=1}^{\infty}\subseteq \eR(\mu_{A,B})$
such that $\|T_n\|=1$ for every $n\in\bN$ and $\lim\limits_{n\to\infty}\|T_n-Y_n\|=0$. It follows that
$\lim\limits_{n\to\infty}\big| 1-\|Y_n\|\big|\leq \lim\limits_{n\to\infty}\|T_n-Y_n\|=0$, that is
\begin{equation} \label{eq07}
\lim\limits_{n\to\infty}\|Y_n\|=1.
\end{equation}
For each $n\in\bN$, let $X_n\in\pB(\eX)$ be such that $Y_n=AX_nB$. Hence, $\eR(Y_n)\subseteq \eR(A)$.
Using the adjoints, we have
$B^*T_{n}^{*}A^*=0$ and $Y_{n}^{*}=B^*X_{n}^{*}A^*$. It follows that 
$\eR(T_{n}^{*}A^*)\subseteq \eN(B^*)$ and $\eR(Y_{n}^{*})\subseteq \eR(B^*)$. By \eqref{eq08},
$$\|(T_{n}^{*}-Y_{n}^{*})A^*\xi\|\geq \sin\theta_{B^*}\|T_{n}^{*}A^*\xi\|\qquad \text{for all}\; \xi\in\eX^*. $$
Taking the supremum over all $\xi\in\eX^*$ with $\|\xi\|=1$ gives
$$\|(T_{n}^{*}-Y_{n}^{*})A^*\|\geq \sin\theta_{B^*}\|T_{n}^{*}A^*\|. $$
It follows
$$\sin\theta_{B^*}\lim\limits_{n\to\infty}\|T_{n}^{*}A^*\|\leq 
\lim\limits_{n\to\infty}\|(T_{n}^{*}-Y_{n}^{*})A^*\|\leq 
\|A^*\| \lim\limits_{n\to\infty}\|T_{n}^{*}-Y_{n}^{*}\|=0.$$
Since $\sin\theta_{B^*}>0$, we have
$$\lim\limits_{n\to\infty}\|T_{n}^{*}A^*\|=0.$$
Consequently,
$$\lim\limits_{n\to\infty}\|Y_{n}^{*}A^*\|\leq 
\lim\limits_{n\to\infty}\big(\|(T_{n}^{*}-Y_{n}^{*})A^*\|+\|T_{n}^{*}A^*\|\big)=0$$
and therefore
\begin{equation} \label{eq06}
\lim\limits_{n\to\infty}\|AY_{n}\|=0.
\end{equation}
Since $\eR(Y_n)\subseteq \eR(A)$, we have, by \eqref{eq09}, $\| AY_n x\|\geq C_A\|Y_n x\|$ for every $x\in\eX$.
This implies $\|AY_n\|\geq C_A\|Y_n\|$. Hence, by \eqref{eq06},
$$\lim\limits_{n\to\infty}\|Y_n\|=0.$$
However, this contradicts \eqref{eq07}. Thus, $\theta_{\mu_{A,B}}$ cannot be $0$.
\end{proof}

The following example shows that an ascent of $1$ of an elementary operator does not imply the positivity of the
angle between the null space and the range.

\begin{example} \label{ex01}
Let $\eH$ be a separable infinite-dimensional complex Hilbert space. Assume that $\eK$ and $\eM$ are
subspaces of $\eH$ such that $\eK\cap\eM=\{0\}$ and $\angle(\eK,\eM)=0$. Recall that the following assertions are
equivalent (see \cite[Theorem 12]{Deu}):
\begin{itemize} 
\item[(i)] $\angle(\eK,\eM)=0$;
\item[(ii)] $\angle(\eK^\perp,\eM^\perp)=0$;
\item[(iii)] $\eK+\eM$ is not closed. 
\end{itemize}
By \Cref{lem04}, $\eK$, $\eK^\perp$, $\eM$, and $\eM^\perp$ must be infinite-dimensional subspaces of $\eH$. 
Thus, there exists an operator $A\in\pB(\eH)$ such that $\eN(A)=\eK$ and $\eR(A)=\eM$. For instance, $A$ can
be a partial isometry with the initial space $\eK^\perp$ and the final space $\eM$. Let $B\in \pB(\eH)$ be an injective 
operator. We claim that the multiplication operator $\mu_{A,B}$ on $\pB(\eH)$ has ascent $1$, but the angle between
$\eN(\mu_{A,B})$ and $\eR(\mu_{A,B})$ is $0$.

It is easily seen that $\asc(A)=1$. Since $B$ is injective, $\asc(\mu_{A,B})=1$, by \Cref{prop02}. By the definition
of $\angle(\eK,\eM)$, there exist sequences $(e_n)_{n=1}^{\infty}\subseteq \eK$ and 
$(f_n)_{n=1}^{\infty}\subseteq \eM$ such that $\| e_n\|=1$ $(n\in\bN)$ and 
$\lim\limits_{n\to\infty}\| e_n-f_n\|=0$.
Let $u_n$ be such that $Au_n=f_n$, i.e., $u_n$ is in the preimage $A^{-1}(\eM)$ for all $n\in\bN$. 
For a fixed $h\in \eH$, $\| h\|=1$,
define $T_n=e_n\otimes h$. Then $\|T_n\|=1$ and $AT_nB=0$ for every $n\in\bN$. Thus $(T_n)_{n=1}^{\infty}$
is a sequence of norm-one operators in the null space $\eN(\mu_{A,B})$. Since $B$ is injective,
the range of $B^*$ is dense in $\eH$. Hence, there exists a sequence $(g_n)_{n=1}^{\infty}\subseteq \eH$
such that $\lim\limits_{n\to\infty}\|B^* g_n-h\|=0$. Let $X_n=u_n\otimes g_n$ $(n\in\bN)$. Then
$AX_nB=A(u_n\otimes g_n)B=f_n\otimes B^*g_n\in \eR(\mu_{A,B})$ for each $n\in\bN$. It follows that
\begin{align*}
\| T_n-AX_nB\|&=\| e_n\otimes h-f_n\otimes B^* g_n\|\leq 
\|e_n\otimes h-f_n\otimes h\|+\|f_n\otimes h-f_n\otimes B^* g_n\|=\\
&=\|e_n-f_n\|+\|f_n\|\|h-B^*g_n\|\xrightarrow{\; n\to\infty\;} 0.
\end{align*}
We conclude that
\begin{align*}
\sin\big(\angle(\eN(\mu_{A,B}),\eR(\mu_{A,B}))\big)&=\inf\{\| T-AXB\|;\; T\in \eN(\mu_{A,B}), \|T\|=1,
X\in\pB(\eH)\}\\
&\leq \lim\limits_{n\to\infty}\| T_n-AX_nB\|=0.
\end{align*}
Thus, $\asc(\mu_{A,B})=1$ and $\sin\big(\angle(\eN(\mu_{A,B}),\eR(\mu_{A,B}))\big)=0$.\qed
\end{example}

\section{Elementary operators of length two} \label{Sec04}
\setcounter{theorem}{0}

An operator $T\in\pB(\eX)$ is power-bounded if there exists a constant $K\geq 1$ such that
$ \|T^n\|\leq K$ for all $n\in\bN$. It is easily seen that the multiplication operator $\mu_{A,B}$ is power-bounded
whenever $A$ and $B$ are power-bounded. The converse does not hold, in general.

The minimum modulus of $T\in\pB(\eX)$ is a number $m(T)=\inf\{\|Tx\|;\; x\in\eX,\, \|x\|=1\}$. 
Clearly, $m(T)\leq\|T\|$. If $m(T)>0$, then $T$ is said to be bounded below. By \cite[\S 9, Theorem 4]{Mul},
$T$ is bounded below if and only if $\eN(T)=\{0\}$ and $\eR(T)$ is closed. 
If there exists a constant $M>0$ such that $M\leq m(T^n)$ for all $n\geq 0$, then we will say that $T$ is 
power-bounded below.

\begin{remark} \label{rem01}
If $T$ has a left inverse $T^l\in\pB(\eX)$, that is, $T^{l} T=I$, then $m(T)\geq \|T^l\|^{-1}>0$. Indeed, note that 
$1=m(T^{l} T)\leq \|T^{l}\| m(T)$, by \cite[\S 9, Theorem 6~(i)]{Mul}. Clearly, $(T^{l})^n$ is a left inverse 
of $T^n$. Hence, if $T^{l}$ is power-bounded with constant $K$, then 
$1=m((T^{l})^n T^n )\leq \|(T^{l})^n\| m(T^n)\leq K m(T^n)$ for every $n\geq 0$ and therefore $T$ is 
power-bounded below by a constant $M\geq K^{-1}$. 
\end{remark}

\begin{theorem} \label{theo03}
Let $\bdA=(A_1, A_2)$ and $\bdB=(B_1, B_2)$ be commuting pairs of operators on $\eX$ such that
$\eN(\Delta_{\bdA,\bdB})$ and $\eR(\Delta_{\bdA,\bdB})$ are nontrivial. Suppose that $\mu_{A_1,B_1}$ and 
$\mu_{A_2,B_2}$ are power-bounded with constants $K_1\geq 1$ and $K_2\geq 1$, respectively.
If the restriction $\mu_{A_1,B_1}|_{\eN(\Delta_{\bdA,\bdB})}$ of $\mu_{A_1,B_1}$ to the null space 
$\eN(\Delta_{\bdA,\bdB})$ is power-bounded below with a constant $M>0$, then
\begin{equation*}
\sin \bigl(\angle \big(\eN(\Delta_{\bdA,\bdB}), \eR(\Delta_{\bdA,\bdB})\big)\bigr) \ge  \frac{M}{K_1 K_2}>0,
\end{equation*}
and consequently, $\asc(\Delta_{\bdA,\bdB})=1$. 
\end{theorem}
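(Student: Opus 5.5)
Write $S=\mu_{A_1,B_1}$, $T=\mu_{A_2,B_2}$ and $\Delta=\Delta_{\bdA,\bdB}=S-T$. Since $\bdA$ and $\bdB$ are commuting pairs, $S$ and $T$ commute. The plan is an averaging argument in the spirit of the mean ergodic theorem. Fix $E\in\eN(\Delta)$ with $\|E\|=1$ and $Y=\Delta(X)$ for some $X\in\pB(\eX)$. The target is the bound $\|E-Y\|\ge \frac{M}{K_1K_2}$. Taking the infimum then gives the angle estimate, and positivity of the angle gives $\eN(\Delta)\cap\eR(\Delta)=\{0\}$. Because $\eN(\Delta)\neq\{0\}$, $\Delta$ is not injective, so $\asc(\Delta)=1$.

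The first step is to build, for each $n\in\bN$, the Cesàro-type operator
$$\Phi_n=\frac1n\sum_{k=0}^{n-1}S^{n-1-k}T^{k}.$$
Since $S$ and $T$ commute, the telescoping identity $(S-T)\sum_{k=0}^{n-1}S^{n-1-k}T^k=S^n-T^n$ holds. Hence $\Phi_n\Delta=\frac1n(S^n-T^n)$, and so $\|\Phi_n\Delta(X)\|\le \frac{K_1+K_2}{n}\|X\|\to0$. Also $\|S^{n-1-k}T^k\|\le \|S^{n-1-k}\|\|T^k\|\le K_1K_2$, which gives $\|\Phi_n\|\le K_1K_2$.

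The second step is to evaluate $\Phi_n$ on the null space. On $\eN(\Delta)$ we have $SE=TE$, and by \Cref{lem03} $\eN(\Delta)$ is invariant under $S$ and $T$. Induction then gives $S^{n-1-k}T^kE=S^{n-1}E$, so $\Phi_nE=S^{n-1}E$. Because $S|_{\eN(\Delta)}$ is power-bounded below with constant $M$, $\|\Phi_nE\|\ge M\|E\|=M$. Putting the pieces together,
$$K_1K_2\|E-Y\|\ge\|\Phi_n(E-Y)\|\ge \|S^{n-1}E\|-\|\Phi_nY\|\ge M-\frac{(K_1+K_2)\|X\|}{n}.$$
Letting $n\to\infty$ with $X$ fixed yields $\|E-Y\|\ge M/(K_1K_2)$, as required.

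The main obstacle is the bookkeeping in the second step. One must check that $T^kE\in\eN(\Delta)$ so that $S$ may be swapped for $T$ repeatedly, and that the power-bounded-below hypothesis is applied to powers of the restriction, including the power $n-1$ (the definition covers all exponents $\ge0$). The rest is routine. The limit in $n$ is legitimate because $Y$ has a fixed preimage $X$, so no uniformity in $Y$ is needed.
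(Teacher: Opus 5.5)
Your proposal is correct and is essentially the paper's argument. The paper uses the same telescoping identity $\mu_{A_1,B_1}^{n}-\mu_{A_2,B_2}^{n}=\sum_{i=0}^{n-1}\mu_{A_1,B_1}^{n-1-i}\mu_{A_2,B_2}^{i}\Delta_{\bdA,\bdB}$ and the same collapse $\mu_{A_1,B_1}^{n-1-i}\mu_{A_2,B_2}^{i}(T)=\mu_{A_1,B_1}^{n-1}(T)$ on $\eN(\Delta_{\bdA,\bdB})$, then divides by $n$ and lets $n\to\infty$ with $X$ fixed; your $\Phi_n$ simply packages that averaged sum, and your remark that non-injectivity is what upgrades $\asc(\Delta_{\bdA,\bdB})\le 1$ to $=1$ makes explicit a point the paper leaves implicit.
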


\begin{proof}
Using the telescoping identity for commuting operators $\mu_{A_1,B_1}$ and $\mu_{A_2,B_2}$, we have
$$\mu_{A_1,B_1}^{n} - \mu_{A_2,B_2}^{n} = 
\sum_{i=0}^{n-1} \mu_{A_1,B_1}^{n-1-i} (\mu_{A_1,B_1}-\mu_{A_2,B_2})\mu_{A_2,B_2}^{i}=
\sum_{i=0}^{n-1} \mu_{A_1,B_1}^{n-1-i} \mu_{A_2,B_2}^{i}\Delta_{\bdA,\bdB}.$$
Applying this identity to an arbitrary $X \in \pB(\eX)$, we obtain
\begin{equation} \label{eq16}
\mu_{A_1,B_1}^{n}(X) - \mu_{A_2,B_2}^{n}(X) = 
\sum_{i=0}^{n-1}\mu_{A_1,B_1}^{n-1-i}\mu_{A_2,B_2}^{i}\Delta_{\bdA,\bdB}(X).
\end{equation}
Let $T \in \eN(\Delta_{\bdA,\bdB})$ be arbitrary. It follows from $\mu_{A_1,B_1}(T)= \mu_{A_2,B_2}(T)$ 
and $\mu_{A_1,B_1}\mu_{A_2,B_2}=\mu_{A_2,B_2}\mu_{A_1,B_1}$ that
$\mu_{A_1,B_1}^{k}(T) = \mu_{A_2,B_2}^{k}(T)$ for all $k\geq 0$. As such, for each $i \in \{0, \dots, n-1\}$,
we have
$$ \mu_{A_1,B_1}^{n-1-i}\mu_{A_2,B_2}^{i}(T) = \mu_{A_1,B_1}^{n-1}(T).$$
Using \eqref{eq16}, we can therefore write
\begin{align*}
n\mu_{A_1,B_1}^{n-1}(T) &= \sum_{i=0}^{n-1} \mu_{A_1,B_1}^{n-1-i}\mu_{A_2,B_2}^{i}(T)\\
&= \sum_{i=0}^{n-1} \mu_{A_1,B_1}^{n-1-i}\mu_{A_2,B_2}^{i}
\big(T - \Delta_{\bdA,\bdB}(X) + \Delta_{\bdA,\bdB}(X)\big)\\
&= \sum_{i=0}^{n-1} \mu_{A_1,B_1}^{n-1-i}\mu_{A_2,B_2}^{i}\big(T - \Delta_{\bdA,\bdB}(X)\big)+
\sum_{i=0}^{n-1} \mu_{A_1,B_1}^{n-1-i}\mu_{A_2,B_2}^{i}\Delta_{\bdA,\bdB}(X)\\
&= \sum_{i=0}^{n-1} \mu_{A_1,B_1}^{n-1-i}\mu_{A_2,B_2}^{i}\big(T - \Delta_{\bdA,\bdB}(X)\big)+
\mu_{A_1,B_1}^{n}(X)-\mu_{A_2,B_2}^{n}(X),
\end{align*}
where $X\in\pB(\eX)$ is arbitrary.
Taking the norm on both sides and applying the triangle inequality, we get
\begin{align*}
n &\|\mu_{A_1,B_1}^{n-1}(T)\|\leq \\
& \le 
\sum_{i=0}^{n-1} \|\mu_{A_1,B_1}^{n-1-i}\| \|\mu_{A_2,B_2}^{i}\| \|T - \Delta_{\bdA,\bdB}(X)\|  + 
(\|\mu_{A_1,B_1}^{n}\|+\|\mu_{A_2,B_2}^{n}\|)\|X\| \\
&\le n K_1 K_2 \|T-\Delta_{\bdA,\bdB}(X)\| + (K_1 + K_2) \|X\|.
\end{align*}
Since $\mu_{A_1,B_1}|_{\eN(\Delta_{\bdA,\bdB})}$ is power-bounded below with a constant $M>0$, it follows that
$$ n M \|T\| \le n K_1 K_2 \|T-\Delta_{\bdA,\bdB}(X)\| +(K_1 + K_2) \|X\|.$$
Fix $X\in \pB(\eX)$. Then dividing by $n$ and taking the limit as $n \to \infty$, the second term vanishes, yielding
$$ M \|T\| \le K_1 K_2 \|T-\Delta_{\bdA,\bdB}(X)\|,$$
which is equivalent to 
$$\|T-\Delta_{\bdA,\bdB}(X)\| \ge \frac{M}{K_1 K_2} \|T\|.$$
Consequently,
\begin{align*}
\sin\bigl(\angle\bigl(\eN(\Delta_{\bdA,\bdB}),\eR(\Delta_{\bdA,\bdB})\bigr)\bigr) &= \inf \{ \|T-\Delta_{\bdA,\bdB}(X)\|;\; 
T \in \eN(\Delta_{\bdA,\bdB}), \|T\| = 1, X \in \pB(\eX) \} \\
&\ge \frac{M}{K_1 K_2} > 0.
\end{align*}
By \Cref{lem04}, the positivity of the angle implies that 
$\eR(\Delta_{\bdA,\bdB}) \cap \eN(\Delta_{\bdA,\bdB}) = \{0\}$, and thus $\asc(\Delta_{\bdA,\bdB})=1$.
\end{proof}

A few corollaries follow immediately. Recall that $\tau_{A,B}=\mu_{A,B}-\mu_{I,I}$.

\begin{corollary} \label{cor05}
If $A$ and $B$ are power-bounded operators, then $\asc(\tau_{A,B})=1$. 
\end{corollary}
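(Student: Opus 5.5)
We intend to apply \Cref{theo03} with $\bdA=(A,I)$ and $\bdB=(B,I)$, so that $\Delta_{\bdA,\bdB}=\mu_{A,B}-\mu_{I,I}=\tau_{A,B}$. Both pairs are trivially commuting, because $I$ commutes with everything. Let $K_A,K_B\geq 1$ be the power bounds of $A$ and $B$. Since $\mu_{A,B}^n=\mu_{A^n,B^n}$, we get $\|\mu_{A,B}^n\|\leq\|A^n\|\|B^n\|\leq K_AK_B$, so $\mu_{A,B}$ is power-bounded with $K_1=K_AK_B$. The operator $\mu_{I,I}$ is the identity on $\pB(\eX)$, so it is power-bounded with $K_2=1$.

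The key step is to show that the restriction of $\mu_{A,B}$ to $\eN(\tau_{A,B})$ is power-bounded below. If $T\in\eN(\tau_{A,B})$, then $ATB=T$, and by induction $\mu_{A,B}^n(T)=T$ for all $n\geq 0$. Hence the restriction is the identity on this null space, and it is power-bounded below with $M=1$. \Cref{theo03} then gives
$$\sin\bigl(\angle(\eN(\tau_{A,B}),\eR(\tau_{A,B}))\bigr)\geq \frac{1}{K_AK_B}>0,$$
which yields $\eN(\tau_{A,B})\cap\eR(\tau_{A,B})=\{0\}$.

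The main obstacle is the hypothesis of \Cref{theo03} that both the null space and the range be nontrivial. We treat the degenerate cases separately.

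\begin{itemize}
\item[(1)] If $\eN(\tau_{A,B})=\{0\}$, then $\tau_{A,B}$ is injective and its ascent is $0$.
\item[(2)] If $\eR(\tau_{A,B})=\{0\}$, then $\tau_{A,B}=0$. Then $\eN(\tau_{A,B}^k)=\pB(\eX)$ for every $k\geq 0$, so the intersection $\eN\cap\eR$ is again trivial and the ascent is at most $1$.
\item[(3)] If both spaces are nontrivial, then $\tau_{A,B}$ is neither injective nor zero, and the argument above applies. Here the ascent is exactly $1$: the null space is nontrivial, so the ascent is at least $1$, and the trivial intersection gives ascent at most $1$.
\end{itemize}

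So the statement should be read with the same implicit nontriviality assumption as in \Cref{theo03}, or as $\asc(\tau_{A,B})\leq 1$ in general. In the nontrivial case the ascent equals $1$. I would state this caveat explicitly in the corollary.
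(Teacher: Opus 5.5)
Your proposal is correct and follows the paper's proof: you apply \Cref{theo03} to the pairs $(A,I)$ and $(B,I)$, and you observe that the restriction of $\mu_{A,B}$ to $\eN(\tau_{A,B})$ is the identity, so it is power-bounded below with $M=1$. Your caveat is also justified, since the paper dismisses the degenerate cases as ``obvious''; however, if $\tau_{A,B}$ is injective (for example $A=B=0$, where $\tau_{A,B}=-\mu_{I,I}$), the ascent is $0$, so the statement should really be read as $\asc(\tau_{A,B})\leq 1$. One small correction to your case (2): $\eN(\tau_{A,B}^0)=\{0\}$, not $\pB(\eX)$, so when $\tau_{A,B}=0$ the ascent is in fact exactly $1$.
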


\begin{proof}
The statement is obvious if $\eN(\tau_{A,B})$ or $\eR(\tau_{A,B})$ is trivial. On the other hand, if
$\eN(\tau_{A,B})$ and $\eR(\tau_{A,B})$ are nontrivial, then we can apply \Cref{theo03}. Indeed, for any 
$T \in \eN(\tau_{A,B})$, the equality $ATB = T$ implies $A^n T B^n = T$, that is, $\mu_{A,B}^{n}(T) = T$, 
for all $n \geq 0$, which means that $\big(\mu_{A,B}|_{\eN(\tau_{A,B})}\big)^n$ acts as the identity operator
and therefore it is power-bounded below with constant $M=1$.
\end{proof}

The following is an extension of Anderson's result \cite[Theorem 1.4]{And}.

\begin{corollary} \label{cor07}
If $A \in \pB(\eX)$ is an isometry and $B \in \pB(\eX)$ is a contraction, then 
$\eN(\delta_{A,B})\perp \eR(\delta_{A,B})$.
\end{corollary}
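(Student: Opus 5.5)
The plan is to realize $\delta_{A,B}$ as an elementary operator of length two and apply \Cref{theo03} with all constants equal to $1$. Write $\delta_{A,B}=\Delta_{\bdA,\bdB}$ with $\bdA=(A,I)$ and $\bdB=(I,B)$, so that $\mu_{A_1,B_1}=L_A$ and $\mu_{A_2,B_2}=R_B$. Both pairs $(A,I)$ and $(I,B)$ are trivially commuting. The trivial cases are disposed of first. If $\eN(\delta_{A,B})=\{0\}$ or $\eR(\delta_{A,B})=\{0\}$, then $\eN(\delta_{A,B})\perp\eR(\delta_{A,B})$ holds automatically, since $\{0\}\perp\eM$ and $\eK\perp\{0\}$.

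For the nontrivial case, the hypotheses of \Cref{theo03} are checked one by one.
\begin{itemize}
\item \emph{Power-boundedness.} Since $\|L_A^n\|=\|L_{A^n}\|=\|A^n\|=1$ for an isometry, $L_A$ is power-bounded with $K_1=1$. Likewise $\|R_B^n\|=\|B^n\|\le 1$ for a contraction, so $R_B$ is power-bounded with $K_2=1$.
\item \emph{Power-bounded below on the null space.} For every $T\in\pB(\eX)$, $\|L_A^nT\|=\|A^nT\|=\sup_{\|x\|=1}\|A^nTx\|=\|T\|$, because $A^n$ is an isometry. So $L_A$ is isometric on all of $\pB(\eX)$. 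In particular its restriction to $\eN(\delta_{A,B})$ satisfies $m\big((L_A|_{\eN(\delta_{A,B})})^n\big)=1$ for all $n$, giving $M=1$. Invariance of $\eN(\delta_{A,B})$ under $L_A$ is given by \Cref{lem03}, so the restriction makes sense.
\end{itemize}

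\Cref{theo03} then yields $\sin\bigl(\angle(\eN(\delta_{A,B}),\eR(\delta_{A,B}))\bigr)\ge M/(K_1K_2)=1$. Since the sine is always at most $1$, the angle equals $\pi/2$. By the remark in Section~\ref{Sec02}, this is exactly Birkhoff--James orthogonality $\eN(\delta_{A,B})\perp\eR(\delta_{A,B})$.

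There is no real obstacle. The only points needing care are the identification of $\delta_{A,B}$ with $\Delta_{(A,I),(I,B)}$ (so that $\mu_{I,B}=R_B$ and $\mu_{A,I}=L_A$) and the observation that left multiplication by an isometry is isometric in operator norm, which gives the sharp constant $M=1$.
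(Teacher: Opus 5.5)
Your proposal is correct and follows the paper's proof essentially step for step. You write $\delta_{A,B}$ as $\mu_{A,I}-\mu_{I,B}=L_A-R_B$, take $K_1=K_2=1$ from the isometry and contraction hypotheses, and take $M=1$ from $\|A^nT\|=\|T\|$; \Cref{theo03} then forces $\sin\bigl(\angle(\eN(\delta_{A,B}),\eR(\delta_{A,B}))\bigr)=1$, which is Birkhoff--James orthogonality.
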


\begin{proof}
The statement is trivial if $\eN(\delta_{A,B})$ or $\eR(\delta_{A,B})$ is trivial. Assume that they are nontrivial.
Clearly, $A$ and $B$ are power-bounded with constant $1$ and the same holds for $\mu_{A,I}$ and $\mu_{I,B}$.
Since $A$ is an isometry, $\| A^n X\|=\|X\|$, for every $X\in\pB(\eX)$ and in particular for every 
$T\in \eN(\delta_{A,B})$. Thus, the restriction of $L_A=\mu_{A,I}$ to $\eN(\delta_{A,B})$ is power-bounded 
below with constant $M=1$. By \Cref{theo03}, 
$\sin\bigl( \angle (\eN(\delta_{A,B}), \eR(\delta_{A,B}))\bigr)=1$, that is, 
$\eN(\delta_{A,B})\perp \eR(\delta_{A,B})$.
\end{proof}

Recall from \cite{Lor} that an invertible operator $T\in\pB(\eX)$ is uniformly-bounded if 
there exists a constant $K\geq 1$ such that $\|T^n\|\leq K$ for every integer $n\in\bZ$, that is, 
$T$ and its inverse are power-bounded with the constant $K$.
Sz.-Nagy \cite{SzN} showed that in Hilbert space $\eH$ an operator $T\in\pB(\eH)$ is uniformly-bounded
if and only if there exists an invertible positive operator $Q\in\pB(\eH)$ such that $QTQ^{-1}$ is unitary.
For our purposes, we can slightly extend the above definition as follows: an operator $T\in\pB(\eX)$
is doubly-bounded if $T$ has a left inverse $T^l\in\pB(\eX)$ such that $T$ and $T^l$ are power-bounded.

\begin{corollary} \label{cor02}
If $A\in\pB(\eX)$ is doubly-bounded, then $\asc(\delta_{A,B})=1$ for every power-bounded operator $B\in\pB(\eX)$.
\end{corollary}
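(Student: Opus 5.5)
The plan is to apply \Cref{theo03} with $\bdA=(A,I)$ and $\bdB=(I,B)$. Then $\Delta_{\bdA,\bdB}=\mu_{A,I}-\mu_{I,B}=L_A-R_B=\delta_{A,B}$. Both pairs are trivially commuting, since each contains the identity. If $\eN(\delta_{A,B})$ or $\eR(\delta_{A,B})$ is trivial, then $\eN(\delta_{A,B})\cap\eR(\delta_{A,B})=\{0\}$ and hence $\asc(\delta_{A,B})\leq 1$. (In the degenerate case where the null space is trivial the ascent is $0$, and the claimed value $1$ should then be read as $\asc\le 1$, as in \Cref{cor05}.) So we may assume both spaces are nontrivial and verify the hypotheses of \Cref{theo03}.

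First, the power-boundedness hypotheses. Let $K$ bound the powers of $A$ and $K'$ bound the powers of $B$. Since $\mu_{A,I}^n=L_{A^n}$, we have $\|\mu_{A,I}^n\|=\|A^n\|\le K$. Likewise $\|\mu_{I,B}^n\|=\|R_{B^n}\|=\|B^n\|\le K'$. So $K_1=K$ and $K_2=K'$ work.

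Second, the power-bounded-below condition on $L_A|_{\eN(\delta_{A,B})}$. Let $A^l$ be a left inverse of $A$ with $\|(A^l)^n\|\le K''$. By \Cref{rem01}, $m(A^n)\ge 1/K''$ for all $n\ge 0$. For every $X\in\pB(\eX)$ this gives
$$\|A^nX\|\ge \sup_{\|x\|=1}\|A^nXx\|\ge m(A^n)\|X\|\ge \|X\|/K''.$$
Alternatively, directly: $\|X\|=\|(A^l)^nA^nX\|\le K''\|A^nX\|$. Hence $L_A^n=L_{A^n}$ is bounded below by $1/K''$ on all of $\pB(\eX)$, and in particular on the invariant subspace $\eN(\delta_{A,B})$ (invariant by \Cref{lem03}, although we only need the restriction's powers, which are restrictions of $L_A^n$).

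Finally, \Cref{theo03} gives
$$\sin\angle\bigl(\eN(\delta_{A,B}),\eR(\delta_{A,B})\bigr)\ge \frac{1}{K''KK'}>0,$$
and therefore $\asc(\delta_{A,B})=1$. There is no real obstacle here. The only points needing care are the observation that the lower bound holds globally, so restricting it is trivial, and the handling of the degenerate trivial cases.
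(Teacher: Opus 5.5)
Your proof is correct and follows the paper's argument: you apply Theorem~\ref{theo03} to $\delta_{A,B}=\mu_{A,I}-\mu_{I,B}$, with $L_A$ and $R_B$ power-bounded, and $L_A$ power-bounded below via the power-bounded left inverse $L_{A^l}$ (Remark~\ref{rem01}). Your caveat that the conclusion can only mean $\asc(\delta_{A,B})\le 1$ when $\eN(\delta_{A,B})=\{0\}$ (where the ascent is $0$) is a valid point that the paper's ``we may assume'' reduction passes over.
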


\begin{proof}
We may assume that $\eN(\delta_{A,B})$ and $\eR(\delta_{A,B})$ are nontrivial.
If $A^l$ is a left inverse of $A$, then $L_{A^l}$ is a left inverse of $L_A$. Since $A$ and $A^l$ are power-bounded,
the multiplication operators $L_A$ and $L_{A^l}$ are power-bounded, as well. By \Cref{rem01},
$L_A$ is power-bounded below. Hence, the assumptions of \Cref{theo03} are fulfilled, and the result follows.
\end{proof}

For a nonempty open set $\Omega\subseteq \bC$, let $\Hol(\Omega)$ be the algebra of all holomorphic functions
$f\colon\Omega\to\bC$. If $A\in\pB(\eX)$ is such that $\sigma(A)\subseteq \Omega$, then, by the Riesz functional
calculus (see \cite[Ch. VII, \S 4]{Con}), for every $f\in \Hol(\Omega)$ there exists a unique operator $f(A)\in \{A\}''$
such that $\sigma(f(A))=f(\sigma(A))$. Moreover, the mapping $f\mapsto f(A)$ is an injective algebra homomorphism.
In particular, if $r(z)=\frac{p(z)}{q(z)}$ is a rational function with poles off $\sigma(A)$, then $r(A)=p(A)q(A)^{-1}$.

For an operator $A$ on a complex Hilbert space, Weber \cite[Theorem 1]{Web} proved that
$\eR(\delta_{f(A)})\subseteq \eR(\delta_A)$ for every function $f$ which is holomorphic in an open neighborhood of 
$\sigma(A)$. In his proof, Weber used the multivariate functional calculus for two commuting operators. 
We provide an elementary proof for a generalized derivation $\delta_{A,B}$ and a rational function with poles off 
$\sigma(A)\cup\sigma(B)$. Utilizing only basic results about holomorphic functional calculus as presented in \cite{Con},
we then prove a variant of Weber's result for $\delta_{A,B}$.

\begin{lemma} \label{lem05}
Let $A, B\in\pB(\eX)$. If $r$ is a rational function with poles off $\sigma(A)\cup\sigma(B)$, then
$\eR(\delta_{r(A),r(B)})\subseteq \eR(\delta_{A,B})$ and $\eN(\delta_{r(A),r(B)})\supseteq \eN(\delta_{A,B})$.
\end{lemma}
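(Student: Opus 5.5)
The plan is to prove an explicit factorization: for each rational $r$ with poles off $\sigma(A)\cup\sigma(B)$, we produce an operator $\Phi_r$ on $\pB(\eX)$ such that
$$\delta_{r(A),r(B)}=\delta_{A,B}\,\Phi_r=\Phi_r\,\delta_{A,B}.$$
Both claims then follow at once. The equality $\delta_{r(A),r(B)}(X)=\delta_{A,B}(\Phi_r(X))$ gives the range inclusion. The equality $\delta_{r(A),r(B)}(X)=\Phi_r(\delta_{A,B}(X))$ shows that $\delta_{A,B}(X)=0$ forces $\delta_{r(A),r(B)}(X)=0$, which is the null space inclusion.

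The construction proceeds in three steps.

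\emph{Polynomials.} For $p(z)=z^n$ we use the telescoping identity already used in the proof of \Cref{theo03}. Since $L_A$ and $R_B$ commute,
$$L_A^n-R_B^n=\Bigl(\sum_{i=0}^{n-1}L_A^{n-1-i}R_B^{i}\Bigr)(L_A-R_B).$$
Here $\Phi_{z^n}=\sum_i L_{A^{n-1-i}}R_{B^i}$ commutes with $\delta_{A,B}$. By linearity this gives $\Phi_p$ for every polynomial $p$. We note that $\Phi_p$ is built only from left multiplications by elements of $\{A\}''$ and right multiplications by elements of $\{B\}''$, so it commutes with $L_A$ and $R_B$.

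\emph{Inverses.} Let $q$ be a polynomial with no zeros on $\sigma(A)\cup\sigma(B)$. Then $q(A)$ and $q(B)$ are invertible, and
$$q(A)^{-1}X-Xq(B)^{-1}=q(A)^{-1}\bigl(Xq(B)-q(A)X\bigr)q(B)^{-1}.$$
Hence
$$\delta_{q(A)^{-1},q(B)^{-1}}=-L_{q(A)^{-1}}R_{q(B)^{-1}}\,\delta_{q(A),q(B)}=-L_{q(A)^{-1}}R_{q(B)^{-1}}\,\Phi_q\,\delta_{A,B}.$$
All the operators involved commute with $L_A$ and $R_B$, hence with $\delta_{A,B}$. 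So $\Phi_{1/q}:=-L_{q(A)^{-1}}R_{q(B)^{-1}}\Phi_q$ works.

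\emph{Products.} For general $r=p/q$ we use the product rule
$$f(A)g(A)X-Xf(B)g(B)=f(A)\bigl(g(A)X-Xg(B)\bigr)+\bigl(f(A)X-Xf(B)\bigr)g(B),$$
that is,
$$\delta_{fg(A),fg(B)}=L_{f(A)}\delta_{g(A),g(B)}+R_{g(B)}\delta_{f(A),f(B)}.$$
Taking $f=p$ and $g=1/q$ gives
$$\Phi_r=L_{p(A)}\Phi_{1/q}+R_{q(B)^{-1}}\Phi_p,$$
and this again commutes with $\delta_{A,B}$. Here $r(A)=p(A)q(A)^{-1}$ as recalled before the lemma.

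The main point needing care is the commutation $\Phi_r\delta_{A,B}=\delta_{A,B}\Phi_r$, since it is what yields the null space inclusion. It holds because every ingredient is a left multiplication by a function of $A$ or a right multiplication by a function of $B$. These commute with $L_A$ and $R_B$ because $q(A)^{-1}$ commutes with $A$ and $q(B)^{-1}$ commutes with $B$, and left and right multiplications always commute with each other.

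If some zero of $q$ lies outside the spectra but $p/q$ is not in lowest terms, we first reduce the fraction. Alternatively, we note that only the non-vanishing of $q$ on $\sigma(A)\cup\sigma(B)$ is used, so the reduction is harmless.
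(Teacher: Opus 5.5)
Your proof is correct and takes the same route as the paper: the telescoping identity for polynomials, the identity $\delta_{q(A)^{-1},q(B)^{-1}}=-L_{q(A)^{-1}}R_{q(B)^{-1}}\delta_{q(A),q(B)}$ for inverses, and a Leibniz-type product rule. The only difference is that you package these into a commuting factorization $\delta_{r(A),r(B)}=\Phi_r\delta_{A,B}=\delta_{A,B}\Phi_r$, which gives the null-space inclusion for free, whereas the paper checks $r(A)T=Tr(B)$ directly from $AT=TB$. (In your last paragraph, the case to guard against is a common factor of $p$ and $q$ vanishing somewhere on $\sigma(A)\cup\sigma(B)$, not a zero of $q$ outside the spectra; writing $r$ in lowest terms, as the paper tacitly does, removes it.)
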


\begin{proof}
First, we will prove that $\delta_{p(A),p(B)}(T)\in\eR(\delta_{A,B})$ for every polynomial 
$p(z)=\alpha_n z^n+\cdots+\alpha_1 z+\alpha_0$. It is easily seen that 
$ A^n T-TB^n=\delta_{A,B}(A^{n-1}T+A^{n-2}TB+\cdots+TB^{n-1})$ for all integers $n\geq 2$
and operators $T\in\pB(\eX)$. Denote $T_n=A^{n-1}T+A^{n-2}TB+\cdots+TB^{n-1}$, for $n\geq 2$,
and $T_1=T$, $T_0=0$. Then
\begin{align*}
\delta_{p(A),p(B)}(T)&=p(A)T-Tp(B)=\alpha_n(A^nT-TB^n)+\cdots+\alpha_1(AT-TB)+\alpha_0(T-T)\\
&=\alpha_n\delta_{A,B}(T_n)+\cdots+\alpha_1\delta_{A,B}(T_1)+\alpha_0\delta_{A,B}(T_0)\\
&=\delta_{A,B}(\alpha_n T_n+\cdots+\alpha_1 T_1+\alpha_0 T_0).
\end{align*}
Hence, for every $T\in\pB(\eX)$ there exists $S\in\pB(\eX)$ such that 
$\delta_{p(A),p(B)}(T)=\delta_{A,B}(S)$ and therefore $\eR(\delta_{p(A),p(B)})\subseteq \eR(\delta_{A,B})$.

Assume that $r(z)=\frac{p(z)}{q(z)}$ is a rational function with poles off $\sigma(A)\cup\sigma(B)$. Thus, $q(A)$ 
and $q(B)$ are invertible operators and $r(A)=p(A)q(A)^{-1}$, $r(B)=p(B)q(B)^{-1}$. By the first part of this proof, 
$\eR(\delta_{p(A),p(B)})\subseteq \eR(\delta_{A,B})$ and $\eR(\delta_{q(A),q(B)})\subseteq \eR(\delta_{A,B})$.
Since $\delta_{q(A)^{-1},q(B)^{-1}}(T)=-\delta_{q(A),q(B)}(q(A)^{-1}T q(B)^{-1})$, for every $T\in\pB(\eX)$,
we have $\eR(\delta_{q(A)^{-1},q(B)^{-1}})\subseteq\eR(\delta_{q(A),q(B)})\subseteq \eR(\delta_{A,B})$.

Let $E,F,G,H\in\pB(\eX)$ be arbitrary operators. Then
\begin{align*}
\delta_{EF,GH}(T)&=EFT-TGH=E(FT)-(FT)G+F(TG)-(TG)H\\
&=\delta_{E,G}(FT)+\delta_{F,H}(TG)
\end{align*}
for every $T\in\pB(\eX)$. It follows that $\eR(\delta_{EF,GH})\subseteq \eR(\delta_{E,G})+\eR(\delta_{F,H})$.
In particular, 
$$\eR(\delta_{r(A),r(B)})=\eR(\delta_{p(A)q(A)^{-1},p(B)q(B)^{-1}})\subseteq \eR(\delta_{p(A),p(B)})+
\eR(\delta_{q(A)^{-1},q(B)^{-1}})\subseteq \eR(\delta_{A,B}).$$

Suppose that $T\in\eN(\delta_{A,B})$. It is easily seen that $p(A)T=Tp(B)$ and $q(A)^{-1}T=Tq(B)^{-1}$.
Hence, $r(A)T=p(A)q(A)^{-1}T=p(A)Tq(B)^{-1}=Tp(B)q(B)^{-1}=Tr(B)$, that is, $T\in\eN(\delta_{r(A),r(B)})$.
\end{proof}

\begin{theorem} \label{theo02}
Let $A, B\in\pB(\eX)$ be arbitrary. If $f$ is a holomorphic function in an open neighborhood of
$\sigma(A)\cup\sigma(B)$, then 
$$\eR(\delta_{f(A),f(B)})\subseteq \overline{\eR(\delta_{A,B})}\quad\text{and}\quad 
\eN(\delta_{f(A),f(B)})\supseteq \eN(\delta_{A,B}).$$
If, additionally, $f$ is injective, then 
$$\overline{\eR(\delta_{f(A),f(B)})}=\overline{\eR(\delta_{A,B})}\quad\text{and}\quad 
\eN(\delta_{f(A),f(B)})=\eN(\delta_{A,B}).$$
\end{theorem}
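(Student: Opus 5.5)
Write $K=\sigma(A)\cup\sigma(B)$ and let $\Omega\supseteq K$ be the open set on which $f$ is holomorphic. The plan is to approximate $f$ by rational functions and pass to the limit in \Cref{lem05}.

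\textbf{Step 1: the inclusions for general $f$.} By Runge's theorem there are rational functions $r_n$ with poles off $K$ (in fact off $\Omega$) such that $r_n\to f$ uniformly on a compact neighborhood of $K$ inside $\Omega$. Continuity of the Riesz functional calculus, via the Cauchy integral over a contour surrounding $K$ inside $\Omega$, gives $r_n(A)\to f(A)$ and $r_n(B)\to f(B)$ in norm. Hence $\delta_{r_n(A),r_n(B)}(T)\to\delta_{f(A),f(B)}(T)$ for every $T\in\pB(\eX)$. By \Cref{lem05}, each $\delta_{r_n(A),r_n(B)}(T)$ lies in $\eR(\delta_{A,B})$, so the limit lies in $\overline{\eR(\delta_{A,B})}$. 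Likewise, if $T\in\eN(\delta_{A,B})$, then $r_n(A)T=Tr_n(B)$ for all $n$, and letting $n\to\infty$ gives $f(A)T=Tf(B)$.

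\textbf{Step 2: the injective case.} Suppose $f$ is injective on $\Omega$. Shrink $\Omega$ if necessary so that each component meets $K$ and there are finitely many components; this keeps $\Omega$ open and containing $K$. Then $U=f(\Omega)$ is open and $g=f^{-1}\colon U\to\Omega$ is holomorphic. By the spectral mapping theorem, $\sigma(f(A))\cup\sigma(f(B))=f(K)\subseteq U$. The composition rule of the Riesz calculus gives $g(f(A))=(g\circ f)(A)=A$, and likewise $g(f(B))=B$. Applying Step 1 to the pair $f(A),f(B)$ and the function $g$ yields
$$\eR(\delta_{A,B})\subseteq\overline{\eR(\delta_{f(A),f(B)})}\quad\text{and}\quad \eN(\delta_{A,B})\supseteq\eN(\delta_{f(A),f(B)}).$$
Combining these with the inclusions from Step 1 and taking closures gives both equalities.

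\textbf{Main obstacle.} The delicate point is the justification in Step 2. One must check that $g$ is holomorphic on a neighborhood of the spectra of $f(A)$ and $f(B)$; here injectivity and the open mapping theorem give $f'\neq0$, so $g$ is holomorphic. One must also invoke the composition theorem $g(f(A))=(g\circ f)(A)$ from \cite{Con}. A secondary technical point is arranging Runge's theorem on a compact neighborhood, so that the poles avoid the contour used in the Cauchy integral; this ensures the norm convergence $r_n(A)\to f(A)$.
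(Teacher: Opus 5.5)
Your proposal is correct and follows essentially the same route as the paper: Runge approximation by rational functions, norm continuity of the Riesz calculus, and \Cref{lem05} with a limiting argument for both the range and the null-space inclusions. In the injective case you then use the holomorphic inverse $g=f^{-1}$ with $g(f(A))=A$ and $g(f(B))=B$ to reverse the inclusions, exactly as the paper does, and your choice to approximate uniformly on a compact neighborhood of $\sigma(A)\cup\sigma(B)$ (so the contour avoids the poles) is slightly more careful than the paper, which only states uniform approximation on $\sigma(A)\cup\sigma(B)$ itself.
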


\begin{proof}
Let $f$ be a holomorphic function on an open neighborhood $\Omega$ of $\sigma(A)\cup\sigma(B)$.
By Runge's theorem (see~\cite[Theorem III.8.1]{Con}), $f$ can be approximated uniformly on 
$\sigma(A)\cup\sigma(B)$ with rational functions $r_n$ that have poles off $\sigma(A)\cup\sigma(B)$, and one has 
$\lim\limits_{n\to\infty}\| r_n(A)-f(A)\|= 0$ and $\lim\limits_{n\to\infty}\| r_n(B)-f(B)\|= 0$
(see~\cite[Theorem VII.4.7]{Con}). Since
\begin{align*}
\|\delta_{r_n(A),r_n(B)}&-\delta_{f(A),f(B)}\|=\\
&=\sup\{\|(r_n(A)T-Tr_n(B))-(f(A)T-Tf(B))\|;\; T\in\pB(\eX),\;\| T\|=1\}\\
&\leq \|r_n(A)-f(A)\|+\|r_n(B)-f(B)\|,
\end{align*}
we see that
\begin{equation} \label{eq13}
\lim_{n\to\infty}\|\delta_{r_n(A),r_n(B)}-\delta_{f(A),f(B)}\|=0.
\end{equation}
Let $T\in \pB(\eX)$ be arbitrary. By \Cref{lem05}, every operator $\delta_{r_n(A),r_n(B)}(T)$ is in 
$\eR(\delta_{A,B})$. Since 
$\| \delta_{f(A),f(B)}(T)-\delta_{r_n(A),r_n(B)}(T)\|\leq \| \delta_{f(A),f(B)}-\delta_{r_n(A),r_n(B)}\|\|T\|\to 0$ 
as $n\to\infty$, we conclude that $\delta_{f(A),f(B)}(T)$ is in $\overline{\eR(\delta_{A,B})}$, which gives
$\overline{\eR(\delta_{f(A),f(B)})}\subseteq \overline{\eR(\delta_{A,B})}$.

Suppose that $T\in\eN(\delta_{A,B})$. By \Cref{lem05}, $T\in\eN(\delta_{r_n(A),r_n(B)})$ for every $n\in\bN$.
Hence, $\|\delta_{f(A),f(B)}(T)\|\leq\|\delta_{f(A),f(B)}-\delta_{r_n(A),r_n(B)}\|\|T\|$. By \eqref{eq13},
the right-hand side converges to $0$ as $n\to\infty$. Hence, $\delta_{f(A),f(B)}(T)=0$, which proves that
$\eN(\delta_{f(A),f(B)})\supseteq \eN(\delta_{A,B})$.

If $f$ is injective, then it is a biholomorphic function from $\Omega$ onto $f(\Omega)$. Hence, $f$ admits 
a holomorphic inverse $g$ on $f(\Omega)$, which is an open neighborhood of 
$\sigma(f(A))\cup\sigma(f(B))=f(\sigma(A)\cup\sigma(B))$. 
Thus, $g(f(z))=z$ for all $z\in\Omega$ and therefore $g(f(A))=A$ and $g(f(B))=B$. It follows that
$\eN(\delta_{f(A),f(B)})\subseteq\eN(\delta_{g(f(A)),g(f(B))})=\eN(\delta_{A,B})$. By the first part of this proof, 
we have $\overline{\eR(\delta_{A,B})}=\overline{\eR(\delta_{g(f(A)),g(f(B))})}\subseteq
\overline{\eR(\delta_{f(A),f(B)})}$,
and consequently $\overline{\eR(\delta_{f(A),f(B)})}= \overline{\eR(\delta_{A,B})}$.
\end{proof}

We close the paper with the following corollary of \Cref{theo02}.

\begin{corollary} \label{cor03}
Let $A,B\in\pB(\eX)$ be such that $\angle\big(\eN(\delta_{A,B}),\eR(\delta_{A,B})\big)>0$.

(i) If $f$ is an injective holomorphic function on an open neighborhood of $\sigma(A)\cup\sigma(B)$, then 
$\angle\big(\eN(\delta_{f(A),f(B)}),\eR(\delta_{f(A),f(B)})\big)>0$, and consequently, $\asc(\delta_{f(A),f(B)})=1$.

(ii) If $f$ is a holomorphic function in a neighborhood of $z=0$ and $f'(0)\ne 0$, then there exists a constant
$c>0$ such that 
$\angle\big(\eN(\delta_{f(\lambda A),f(\lambda B)}),\eR(\delta_{f(\lambda A),f(\lambda B)})\big)>0$, 
and consequently, $\asc(\delta_{f(\lambda A),f(\lambda B)})=1$ for all complex numbers $\lambda$ 
such that $|\lambda|<c$.
\end{corollary}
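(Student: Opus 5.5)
For (i), I will rely on \Cref{theo02} together with the invariance of the angle under closures. Since $f$ is injective, \Cref{theo02} gives
$$\eN(\delta_{f(A),f(B)})=\eN(\delta_{A,B})\quad\text{and}\quad \overline{\eR(\delta_{f(A),f(B)})}=\overline{\eR(\delta_{A,B})}.$$
Section~\ref{Sec02} records that $\angle(\overline{\eK},\overline{\eM})=\angle(\eK,\eM)$. Applying this twice, and using that the null spaces are already closed, we get
$$\angle\big(\eN(\delta_{f(A),f(B)}),\eR(\delta_{f(A),f(B)})\big)=\angle\big(\eN(\delta_{A,B}),\overline{\eR(\delta_{A,B})}\big)=\angle\big(\eN(\delta_{A,B}),\eR(\delta_{A,B})\big)>0.$$
So the angle is not merely positive but actually unchanged.

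The degenerate cases need a short separate check. If $\eN(\delta_{A,B})=\{0\}$ or $\eR(\delta_{A,B})=\{0\}$, the same equalities show that the corresponding space for $f$ is trivial, and the convention $\sin=1$ applies.

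Positivity of the angle gives $\eN(\delta_{f(A),f(B)})\cap\eR(\delta_{f(A),f(B)})=\{0\}$, hence $\asc\le 1$. To claim ascent exactly $1$, I will follow the paper's convention from \Cref{theo03}. If the null space is trivial, the ascent is $0$, and I would note this as the trivial case, as the paper implicitly does.

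For (ii), the plan is to reduce to (i) by showing that $f$ is injective on a suitable neighborhood of $\sigma(\lambda A)\cup\sigma(\lambda B)=\lambda(\sigma(A)\cup\sigma(B))$ for small $|\lambda|$. Since $f'(0)\neq 0$, the inverse function theorem gives a disc $D(0,\rho)$ on which $f$ is holomorphic and injective. Set $R=\max\{r(A),r(B)\}+1$ and $c=\rho/R$; if $R=0$ in a degenerate sense, any $c$ works after adjusting. For $|\lambda|<c$, the set $\lambda(\sigma(A)\cup\sigma(B))$ lies in $D(0,|\lambda|(R-1))\subset D(0,\rho)$, which is an open neighborhood on which $f$ is injective and holomorphic.

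What (i) needs for the pair $(\lambda A,\lambda B)$ is positivity of its own angle. For $\lambda\neq 0$ we have $\delta_{\lambda A,\lambda B}=\lambda\delta_{A,B}$, so the null space and range are the same as those of $\delta_{A,B}$, and the angle is positive. Applying (i) to $\lambda A,\lambda B$ then gives the conclusion.

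The case $\lambda=0$ is handled separately. Here $f(0A)=f(0)I$, so $\delta_{f(0)I,f(0)I}=0$ and the range is trivial. The angle is then positive by the convention, and I would treat this as an edge case.

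The main obstacle I expect is the bookkeeping in (ii): choosing $c$ so that the injectivity disc really contains a neighborhood of the scaled spectra, including the degenerate spectral radius and $\lambda=0$ cases. Part (i) itself is essentially immediate from \Cref{theo02}.
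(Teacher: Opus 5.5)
Your proposal is correct and follows the paper's argument. Part (i) comes from the equalities $\eN(\delta_{f(A),f(B)})=\eN(\delta_{A,B})$ and $\overline{\eR(\delta_{f(A),f(B)})}=\overline{\eR(\delta_{A,B})}$ of \Cref{theo02} together with the invariance of the angle under closures, and part (ii) reduces to (i) via $\delta_{\lambda A,\lambda B}=\lambda\delta_{A,B}$ and a disc around $0$ on which $f$ is injective. Your caveat that positivity of the angle only gives $\asc\le 1$ is legitimate: the ascent is $0$ when $\eN(\delta_{A,B})=\{0\}$, a case the paper's statement glosses over.
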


\begin{proof}
Part (i) follows immediately from \Cref{theo02}. To prove (ii), note first that 
$\delta_{\lambda A,\lambda B}=\lambda \delta_{A,B}$ for every $\lambda\in\bC$. Hence, if $\lambda\ne 0$,
then $\eN(\delta_{A,B})=\eN(\delta_{\lambda A,\lambda B})$ and 
$\eR(\delta_{A,B})=\eR(\delta_{\lambda A,\lambda B})$, and therefore 
$\angle\big(\eN(\delta_{A,B}),\eR(\delta_{A,B})\big)=
\angle\big(\eN(\delta_{\lambda A,\lambda B}),\eR(\delta_{\lambda A,\lambda B})\big)$.
Suppose that $f$ is a holomorphic function in a neighborhood of $z=0$ such that $f'(0)\ne 0$. Then there exists an 
open disk $D_r=\{ z\in\bC;\; |z|<r\}$, where $r>0$, such that $f$ is injective in $D_r$.
Let $c>0$ be such that $\sigma(cA)\cup\sigma(cB)\subseteq D_r$. If $\lambda\in\bC$, $0<|\lambda|<c$, then 
$\sigma(\lambda A)\cup\sigma(\lambda B)\subseteq D_r$ and therefore the statement follows
by item (i) of this corollary. The case $\lambda=0$ is trivial.
\end{proof}

\subsection*{Acknowledgements}
The first two authors were partially supported by the Slovenian Research and Innovation Agency through the 
research programs P2-0268, P1-0285, and research projects N1-0428, J1-70046, J1-70047, and J1-50000.
The third author was supported by the Ministry of Science, Technological Development, and Innovation of 
the Republic of Serbia [Grant Number: 451-03-34/2026-03/200102].

	

\begin{thebibliography}{99}
	
\bibitem{And}
	J. Anderson, 
	\textit{On normal derivations}, 
	Proc. Amer. Math. Soc. \textbf{38} (1973), 135--140.
	
\bibitem{AF}
	J. Anderson, C. Foia\c{s}, 
	\textit{Properties which normal operators share with normal derivations and related operators}, 
	Pacific J. Math. \textbf{61} (1975), no. 2, 313--325.
	
\bibitem{BK}
	J. Bra\v{c}i\v{c}, B. Kuzma,
	\textit{Localizations of the Kleinecke-Shirokov theorem},
	Oper. Matrices \textbf{1} (2007), no. 3, 385--389.
		
\bibitem{Con}
	J. B. Conway, 
	\textit{A Course in Functional Analysis}, 2nd edition,
	Grad. Texts in Math., 96, Springer-Verlag, New York, 1990. 
	
\bibitem{Deu}
	F. Deutsch, 
	\textit{The angle between subspaces of a Hilbert space}, 
	in \textit{S. P. Singh (ed.), Approximation Theory, Wavelets and Applications}, Kluwer Academic Publishers (1995), 
	107--130.
		
\bibitem{Jac} 
	N. Jacobson, 
	\textit{Rational methods in the theory of Lie algebras},
	Ann. of Math. \textbf{36} (1935), 875--881.

\bibitem{Jam}
	R. C. James,
	\textit{Orthogonality and linear functionals in normed linear spaces},
	Trans. Amer. Math. Soc., \textbf{61} (1947), 265--292.

\bibitem{Kap} 
	I. Kaplansky, 
	\textit{Jacobson's Lemma Revisited}, 
	J. Algebra, \textbf{62} (1980), 473--476.

\bibitem{Kle} 
	D. C. Kleinecke, 
	\textit{On operator commutators},
	Proc. Amer. Math. Soc. \textbf{8} (1957), 535--536.

\bibitem{Kob} 
	H. Kober, 
	\textit{A theorem on Banach spaces},
	Compositio Math. \textbf{7} (1940), 135--140.

\bibitem{Lor} 
	E. R. Lorch, 
	\textit{The integral representation of weakly almost-periodic transformations in reflexive vector spaces},
	Trans. Amer. Math. Soc. \textbf{49} (1941), 18--40.
			 
\bibitem{Mag} 
	B. Magajna,
	\textit{Bicommutants and ranges of derivations},
	Linear Multilinear Algebra \textbf{61} (2013), no. 9, 1161--1180.
	
\bibitem{MS} 
	L. Moln\'{a}r, P. \v{S}emrl,
	\textit{Elementary Operators on Standard Operator Algebras},
	Linear Multilinear Algebra \textbf{50} (2002), no. 4, 315--319.

\bibitem{Mul}
	V. M\H{u}ller, 
	\textit{Spectral Theory of Linear Operators}, 2nd edition,
	Operator Theory: Advances and Applications, Vol. 139, Birkh\H{a}user, Basel, Boston, Berlin, 2007.

\bibitem{Shi} 
	F. V. Shirokov,
	\textit{Proof of a conjecture by Kaplansky}, 
	Uspekhi Mat. Nauk \textbf{11} (1956), no. 4, 167--168 (in Russian).
	
\bibitem{Sta}
	H. Stankovi\'{c},
	\textit{Kleinecke-Shirokov theorem: a version for isometric transformations},
	Anal. Math. Phys. \textbf{15} (2025), no. 3, Paper No. 54, 9 pp.

\bibitem{SzN}
	B. Sz.-Nagy,
	\textit{On uniformly bounded linear transformations in Hilbert space},
	Acta Univ. Szeged. Sect. Sci. Math. \textbf{11} (1947), 152--157.	

\bibitem{Web}
	R. E. Weber,
	\textit{Analytic Functions, Ideals, and Derivation Ranges},
	Proc. Amer. Math. Soc. \textbf{40} (1973), 492--496.
	
\bibitem{ZD} 
	Zh.-M. Zheng, H.-Sh. Ding, 
	\textit{A note on closedness of the sum of two closed subspaces in a Banach space},
	Commun. Math. Anal. \textbf{19} (2016), no. 2, 62--67.	
	%
\end{thebibliography}
\end{document}